\documentclass[a4paper]{article}

\usepackage[utf8]{inputenc}
\usepackage[centertags]{amsmath}
\usepackage{amsthm}
\usepackage{amssymb}
\usepackage{amsfonts}

\RequirePackage{doi}
\usepackage{hyperref}
\hypersetup{colorlinks}
\usepackage{url}

\usepackage{algorithm}
\usepackage{algpseudocode}

\usepackage{multirow}
\usepackage{enumitem}

\usepackage{subcaption}
\usepackage{caption}
\usepackage{booktabs}

\usepackage{xcolor}
\usepackage{graphicx}
\definecolor{cb2red}{RGB}{228,26,28} 
\definecolor{cb2blue}{RGB}{55,126,184}

\usepackage{tikz}
\usepackage{pgfplots}
\pgfplotsset{compat=1.18}
\usepgfplotslibrary{groupplots}

\usepackage{fullpage}

\newcommand{\R}{\mathbb{R}}
\newcommand{\N}{\mathbb{N}}

\newcommand{\coloneqq}{:=}
\DeclareMathOperator*{\argmin}{\arg\min}
\DeclareMathOperator*{\minimize}{minimize}
\DeclareMathOperator{\stt}{such~that}
\DeclareMathOperator{\subjectto}{subject~to}
\DeclareMathOperator{\zeroset}{zer}
\DeclareMathOperator{\Fix}{Fix}
\DeclareMathOperator{\prox}{prox}

\DeclareMathOperator{\dom}{dom}
\newcommand{\indicator}{\iota}

\newcommand{\M}{\mathcal M}

\renewcommand{\H}{\ensuremath{\mathcal{H}}}

\newcommand{\ifc}{&\text{if }}
\newcommand{\gr}[1]{\mathrm{gra}({#1})}
\newcommand{\mcp}{\mathrm{mcp}}
\newcommand{\scad}{\mathrm{scad}}

\newcommand{\conv}{\mathrm{conv}}
\newcommand{\sgn}{\mathrm{sign}}
\newcommand{\reg}{\ensuremath{\mathcal{R}}}
\newcommand{\elcap}{\ifmmode \ell_{1}\text{--cap}\else $\ell_{1}$--cap\fi}

\newcommand{\tol}{\operatorname{tol}}
\newcommand{\A}{\mathbb{A}}
\newcommand{\B}{\mathbb{B}}
\newcommand{\NR}{\mathcal{N}}

\newtheorem{theorem}{Theorem}
	\newtheorem{lemma}[theorem]{Lemma}
	\newtheorem{proposition}[theorem]{Proposition}
	
	\newtheorem{corollary}[theorem]{Corollary}
	\theoremstyle{definition}
	\newtheorem{definition}{Definition}

\newtheorem{remark}{Remark}
\newtheorem{assumption}{Assumption}

\pgfplotsset{
  colormap={orange2red}{
    color(0cm)=(orange)
    color(1cm)=(red)
  }
}

\pgfplotsset{
  colormap={red2orange}{
    color(0cm)=(red)
    color(1cm)=(orange)
  }
}

\pgfplotsset{
  colormap={teal2green}{
    color(0cm)=(teal) 
    color(1cm)=(green) 
  }
}

\pgfplotsset{
  colormap={cyan2magenta}{
    color(0cm)=(cyan) 
    color(1cm)=(magenta) 
  }
}

\pgfplotsset{
  colormap={magenta2cyan}{
    color(0cm)=(magenta) 
    color(1cm)=(cyan) 
  }
}

\title{\bfseries Resolvent Moreau identities without monotonicity: theory and applications to Gabay duality, Douglas--Rachford and ADMM}
\author{Andrew Calcan\thanks{Centre for Optimisation and Decision Science, Curtin University, Australia}\and%
    Jordan Collard$^*$\and%
    Alberto De~Marchi\thanks{Department of Aerospace Engineering, University of the Bundeswehr Munich, Germany}\and%
    Scott B. Lindstrom$^*$}

\begin{document}
\maketitle

\newcommand{\mykeywords}{Duality, Splitting methods, Nonconvex optimization, Douglas--Rachford, ADMM.}
\newcommand{\mymsccodes}{49N15, 
65K10, 
90C46.
}

\begin{abstract}
    Duality is most often defined as a relationship between convex functions. If those functions are nonconvex, classical duality breaks down.
    Notwithstanding, we show that another kind of duality still exists, not between the functions themselves, but between the so-called resolvent operators used to solve associated problems.
    In fact, this duality-like relationship holds for any set-valued mapping, and is a generalization of the Moreau's identity.
    We use this duality to study existing operator schemes and to design new ones.
    In particular, we show that the duality-like relationship Daniel Gabay illuminated between the Douglas--Rachford splitting (DR) and the Alternating Direction Method of Multipliers (ADMM) extends to nonmonotone inclusion problems.
    We use this relationship to provide explicit counterexamples to the convergence of ADMM in several open cases, by studying the (easier to analyse) DR scheme.
    Motivated by our observations, we design a class of convergent resolvent homotopy schemes and use them to solve nonconvex-regularized least absolute deviations problems.
    This important problem class has received little attention in the literature, since the convex component of the objective does not enjoy strong convexity.
\end{abstract}

\medskip
\noindent\textbf{Keywords}
\mykeywords

\section{Introduction}

We study algorithms for locating a zero in the sum of two operators:
\begin{equation}
    \text{find }\lambda\in\H \;\stt\; \lambda\in \zeroset(\A+\B),
    \label{general_inclusion_problem}
\end{equation}
where $\H$ denotes a real Hilbert space, $\A,\B:\H\rightrightarrows \H$, and $\zeroset(S)\coloneqq \{y:0\in S(y)\}$. Problems of this form regularly appear across many disciplines, including statistics, engineering, physics and finance \cite{lindstrom2021survey}. Operator splitting methods are frequently used for \eqref{general_inclusion_problem} since they decompose the problem into smaller, computationally--tractable pieces.
Our considerations are motivated by the setting where \eqref{general_inclusion_problem} corresponds to the Lagrangian dual of a nonconvex problem, and specifically the nonconvex--regularized least absolute deviations problem: 
\begin{equation}
    \minimize_{x\in\R^n}\quad \big\| Ux-w \big\|_1 + \reg(x). 
    \label{rlad_problem}\tag{$\reg$LAD}
\end{equation}
Here $U\in\R^{m\times n}$ is a design matrix with corresponding response vector $w\in\R^m$.
Mapping $\reg:\R^n\rightarrow\R$ is a nonconvex regularization function whose inclusion in \eqref{rlad_problem} promotes sparse solutions.
Closely related to the classical LASSO problem \cite{tibshirani1996lasso}, \eqref{rlad_problem} is known to yield more robust and outlier insensitive statistical models. A common regularization choice is the $\ell_1$ norm, in which case the problem is convex and amenable to conic solvers. However, $\ell_1$ regularization is known to systematically shrink \emph{all} coefficients toward zero \cite{tibshirani2020penalties}. The smoothly clipped absolute deviation (SCAD) is widely accepted as the first nonconvex regularization function designed to mitigate the shrinkage bias induced by $\ell_1$ regularization \cite{fan2001scad}.
This idea was later expanded upon by the introduction of the minimax concave penalty (MCP) \cite{zhang2010mcp}.

The existing convergence guarantees for resolvent splitting approaches all require structure absent in \eqref{rlad_problem}.
Such structures are only realizable by approximating the problem.
This is commonly done by replacing the least absolute deviations term with a least squares term, which sacrifices robustness against outliers.
Relevant approaches include the methods proposed in \cite{dao2025dc,themelis2022linesearch}, which require one of the functions to enjoy Lipschitz continuous gradients.

Our approach is based on algorithmic duality.
Daniel Gabay first illuminated such a relationship between the Douglas--Rachford method (DR) and the alternating direction method of multipliers (ADMM) \cite{gabay1983duality}.
This relationship has led to improved understanding of both methods, with ADMM convergence results often given through analysis of DR \cite{eckstein2015duality,giselsson2017linear,yuan2012convergence}.
We refer the interested reader to \cite[Appendix 5]{lindstrom2021survey} for a detailed explanation of Gabay's original result.
Eckstein and Bertsekas later showed that DR, and therefore ADMM, is a special case of the proximal point algorithm \cite{bertsekas1992ppa}.
An accessible tutorial for analysing ADMM through its equivalence to DR is given in \cite{eckstein2015duality}.
This work is widely credited with clarifying and popularising the interpretation of ADMM as DR applied to the dual problem.
Another proof of the ADMM and DR relationship is given in the survey paper \cite{moursi2019equivalences}.
The methods' equivalence was later extended to certain nonconvex cases in \cite{themelis2020duality}.

\paragraph{Contributions}

Our main contributions are as follows.
We show in Theorem~\ref{thm:Moreaus_identity} that even with no assumptions on the operators or linear maps, the resolvent operators used by DR and ADMM enjoy a relationship that fully generalizes Moreau's identity.
Exploiting this relationship, we show in Theorem~\ref{thm:GMI_duality} that Gabay's duality-like relationship indeed characterizes DR and ADMM--type methods for all inclusion problems.
Consequently, Theorem~\ref{thm:GMI_duality} immediately allows practitioners to apply duality-based extrapolation schemes to nonconvex problems, including  dynamical systems approaches and the popular A$^3$DMM \cite{lindstrom2022computable,poon2019trajectory}. Based on Gabay's duality, we propose and analyse a provably convergent scheme for \eqref{general_inclusion_problem} under the assumption that the operators enjoy nonexpansive single--valued selections of their resolvents.

\paragraph{Outline}

This paper is organised as follows.
Section~\ref{sec:background} reviews basic concepts that will be used throughout.
In section~\ref{sec:Moreau_identity} we introduce and prove a generalization of Moreau's identity. We use this general expression in section~\ref{sec:duality} to prove a Gabay's duality between DR and a primal algorithm it induces in a general setting.
In section~\ref{sec:prim_dual_scheme} we introduce a duality--motivated scheme for addressing \eqref{general_inclusion_problem}, numerically validating this proposed scheme on \eqref{rlad_problem} with SCAD and MCP regularization in section~\ref{sec:experiment_results}.

\section{Background}\label{sec:background}
Here and throughout, $I$ denotes the identity mapping. We denote by $\Fix T = \{x : Tx=x\}$ the fixed point set of a set-valued operator $T:\H \rightrightarrows\H$; $T^{-1}$ is the set-valued `inverse' map that satisfies $y \in Tx \iff x \in T^{-1}y$. The graph of a potentially set-valued mapping $F$ is denoted by $\gr{F}:=\{(x,y):y\in F(x)\}$. For sets $U$ and $V$ we define $U+V:=\{u+v:u\in U,\; v\in V\}$. 

We now recall notions of generalized derivatives. 
\begin{definition}[Subdifferentials]
    Let $h:\H\rightarrow (-\infty,+\infty]$ be a proper and lower semicontinuous function. The Fr\'{e}chet subdifferential of $h$ at $x\in \H$ is given by
    $$
    \partial_{\mathrm{F}}h(x)\coloneqq \Big\{v\in \H :\underset{u\rightarrow x}{\lim\inf}\;\frac{h(u)-h(x)-\langle v,u-x\rangle}{\|u-x\|}\geq 0\Big\}.
    $$
    The Mordukhovich subdifferential of $h$ at $x\in \H$ is defined as
    $$
    \partial_{\mathrm{M}} h(x)\coloneqq \big\{x^*\in \H: \exists x_n\rightarrow x,\;x^*_n\in \partial_{\mathrm{F}}h(x_n)\text{ with } h(x_n)\rightarrow h(x),\;x_n^*\rightarrow x^*\big\}.
    $$
    Taking the convex hull of the Mordukhovich subdifferential yields the Clarke subdifferential, that is, $\partial h(x)\coloneqq \conv\{\partial_{\mathrm{M}}h(x)\}$. 
\end{definition}
Throughout we refer to the Clarke subdifferential simply as the subdifferential and denote it using $\partial$. 
\begin{theorem}[Clarke's chain rule {\cite[Theorem 2.6.6]{clarke1990analysis}}]\label{thm:Clarke}
    Let $x\in\H_1$ be arbitrary, $M:\H_1 \rightarrow \H$ be linear and $h: \H \to(-\infty,+\infty]$ be Lipschitz near $M(x)$.
    Then $h\circ M$ is Lipschitz near $x$ and it holds that
    $$
    \partial(h \circ M)(x) = M^* \partial h(Mx),
    $$
    where $M^*$ denotes the adjoint of $M$.
\end{theorem}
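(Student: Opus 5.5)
The plan is to compare support functions: both sides of the claimed identity are weak$^*$-closed convex sets. I would first establish the Lipschitz property of $h\circ M$ near $x$. If $h$ is $L$-Lipschitz on a ball $B(Mx,\varepsilon)$, then for $u,v$ close enough to $x$ (within $\varepsilon/\|M\|$) we have $|h(Mu)-h(Mv)|\le L\|M\|\,\|u-v\|$. Hence $\partial(h\circ M)(x)$ is nonempty, convex and weak$^*$-compact, with support function the Clarke directional derivative $(h\circ M)^\circ(x;d)$. Similarly, $M^*\partial h(Mx)$ is convex and weak$^*$-compact, being the image of a compact convex set under the continuous linear map $M^*$. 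Its support function is
\[
d\mapsto \sup_{\zeta\in\partial h(Mx)}\langle M^*\zeta,d\rangle = h^\circ(Mx;Md).
\]
So the claim reduces to the identity $(h\circ M)^\circ(x;d)=h^\circ(Mx;Md)$ for all $d\in\H_1$.

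For the inequality ``$\le$'', I would write
\[
(h\circ M)^\circ(x;d)=\limsup_{y\to x,\,t\downarrow0}\frac{h(My+tMd)-h(My)}{t}.
\]
Since $My\to Mx$, every such quotient is one of the quotients appearing in $\limsup_{y'\to Mx,\,t\downarrow 0}\frac{h(y'+tMd)-h(y')}{t}=h^\circ(Mx;Md)$, now restricted to $y'\in\operatorname{ran}M$. This gives the inclusion $\partial(h\circ M)(x)\subseteq M^*\partial h(Mx)$.

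For ``$\ge$'', I must show that the limsup defining $h^\circ(Mx;Md)$ is already attained along base points $y'\in\operatorname{ran}M$. This step is the main obstacle, and I should be candid that it fails as stated. Take $\H_1=\R$, $\H=\R^2$, $Mx=(x,x)$ and $h(y_1,y_2)=|y_1|-|y_2|$. Then $h\circ M\equiv0$, so $\partial(h\circ M)(0)=\{0\}$, whereas $M^*\partial h(0)=[-1,1]-[-1,1]=[-2,2]$. The reverse inequality therefore cannot be proved in the stated generality, and the argument must use some extra structure that lets base points $y'$ near $Mx$ be replaced by points of $\operatorname{ran}M$ without changing the difference quotient.

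The hypothesis I would try first is surjectivity of $M$, where I would argue as follows. By the open mapping theorem there is $c>0$ such that every $y'$ with $\|y'-Mx\|<\delta$ equals $My$ for some $y$ with $\|y-x\|\le c\delta$. Then every quotient in $h^\circ(Mx;Md)$ is a quotient in $(h\circ M)^\circ(x;d)$, and equality of the support functions follows. Alternatively, if $h$ is Clarke regular at $Mx$, I would use $h^\circ(Mx;Md)=h'(Mx;Md)$ together with the fact that ordinary one-sided directional derivatives do satisfy the chain rule for linear maps, so $h'(Mx;Md)=(h\circ M)'(x;d)\le(h\circ M)^\circ(x;d)$. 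In either case, equality of the support functions together with closed convexity of both sets yields $\partial(h\circ M)(x)=M^*\partial h(Mx)$ by the Hahn--Banach separation theorem.

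Without such a hypothesis, what the plan actually delivers is the inclusion ``$\subseteq$''. That is presumably what the citation of \cite[Theorem 2.6.6]{clarke1990analysis} guarantees in general, and I would flag that the equality as worded needs the extra assumption.
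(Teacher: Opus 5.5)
Your analysis is correct, and it goes further than the paper, which gives no argument for this statement beyond the citation. The equality as worded is false, and your example refutes it. Taking $y_1=0$, $y_2=-td_2$ in the $\limsup$ gives $h^\circ(0;d)=|d_1|+|d_2|$, so $\partial h(0)=[-1,1]^2$ and $M^*\partial h(0)=[-2,2]\neq\{0\}=\partial(h\circ M)(0)$. The example also survives the paper's own definition of $\partial$ as the convex hull of the limiting subdifferential, since $\partial_{\mathrm{M}}h(0)=[-1,1]\times\{-1,1\}$.

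The cited Theorem 2.6.6 of \cite{clarke1990analysis} is the chain rule for generalized Jacobians, and as far as I recall it asserts only an inclusion. For linear $M$ that inclusion is your $\partial(h\circ M)(x)\subseteq M^*\partial h(Mx)$. Equality for a strictly differentiable inner map, under regularity of $h$ or $-h$ at $Mx$ or under surjectivity, is Clarke's Chain Rule II (Theorem 2.3.10). It is proved by exactly your comparison of $(h\circ M)^\circ(x;d)$ with $h^\circ(Mx;Md)$. So your plan proves precisely the correct version of the statement.

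Two small additions. First, the statement assumes only that $M$ is linear, but the Lipschitz claim and your open-mapping step both need $M$ bounded. To see this, compose an unbounded linear functional with $h(s)=s$. You use boundedness tacitly through $\|M\|$, so make it an explicit hypothesis. Second, equality also holds when $-h$ is regular: apply your regular case to $-h$ and use $\partial(-\psi)=-\partial\psi$.

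The paper invokes this theorem only in the background remark linking the dual inclusion to the dual minimization problem. There the outer functions are convex conjugates, hence regular wherever they are locally Lipschitz, so your corrected version is all that remark needs.
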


We denote by $\Gamma(X)$ the set of proper, lower semicontinuous and convex functions $h:X\rightarrow(-\infty,+\infty]$.
If $h\in\Gamma(\H)$, the Fr\'{e}chet, Mordukhovich and Clarke subdifferentials coincide with the classical subdifferential: $\{v: h(y)\geq h(x)+\langle v,y-x\rangle \;\forall y\in \H\}$.
Furthermore, $\partial h:\H\rightrightarrows \H$ is monotone: $\langle u - v,u' - v' \rangle \geq 0\; \forall (u,u'),(v,v')\in \gr{\partial h}$ and there exists no monotone operator whose graph strictly contains $\gr{\partial h}$.
The latter property implies that $\partial h$ is \emph{maximal} monotone.

\begin{definition}[Dual operator]\label{def:nonexpansive_dual_resolvents}
    Let $H:\H_1\rightrightarrows\H_1$, $M:\H_1\rightarrow \H$, and $d\in\H$.
    We refer to 
    $$
    D_H=(-M\circ H^{-1}\circ -M^*)(\cdot)-d
    $$
    as the \emph{dual operator} of $H$ with respect to $M$ and $d$.
\end{definition}
Herein we dedicate particular interest to the case of \eqref{general_inclusion_problem} where $\A$ and $\B$ are dual operators.
The relationship between this case and a problem in the primal space is described in the following proposition.
\begin{proposition}\label{prop:primal_dual_problems}
    Let $F:\H_x\rightrightarrows\H_x$ and $G:\H_z\rightrightarrows\H_z$.
    Let $A:\H_x\rightarrow \H$, $B:\H_z\rightarrow \H$ and $d_F,d_G\in\H$. Let 
    $$
    D_F=(-A\circ F^{-1}\circ -A^*)(\cdot)-d_F\quad \text{and}\quad D_G=(-B\circ G^{-1}\circ -B^*)(\cdot)-d_G
    $$
    be the dual operators of $F$ and $G$, respectively. For $\lambda\in\H$, the following are equivalent: 
    \begin{equation*}
    0\in D_F(\lambda)+D_G(\lambda);
    \label{dual_inclusion_problem}\tag{$\ensuremath{\mathcal{D}}$-inc}
    \end{equation*}
    there exists $x\in F^{-1}(-A^*\lambda)$ and $z\in G^{-1}(-B^*\lambda)$ such that
    \begin{equation}
    0\in
    \begin{bmatrix}
    F(x)+A^*\lambda\\
    G(z)+B^*\lambda\\
    Ax+Bz+d_F+d_G
    \end{bmatrix}.
    \label{primal_inclusion_problem}\tag{$\ensuremath{\mathcal{P}}$-inc}
    \end{equation}
\end{proposition}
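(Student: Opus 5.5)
The plan is to unfold the definition of the dual operators as set-valued compositions and read off the equivalence directly; no monotonicity or structure is needed, only the definitions of inverse, composition and Minkowski sum. First I fix the meaning of the composition: $(-A\circ F^{-1}\circ -A^*)(\lambda) = \{-Ax : x\in F^{-1}(-A^*\lambda)\}$. Hence
$$
D_F(\lambda)=\{-Ax-d_F : x\in F^{-1}(-A^*\lambda)\},\qquad D_G(\lambda)=\{-Bz-d_G : z\in G^{-1}(-B^*\lambda)\}.
$$
By the definition $U+V=\{u+v\}$, the inclusion $0\in D_F(\lambda)+D_G(\lambda)$ holds exactly when there are $x\in F^{-1}(-A^*\lambda)$ and $z\in G^{-1}(-B^*\lambda)$ with $0=-Ax-d_F-Bz-d_G$, that is, $Ax+Bz+d_F+d_G=0$.

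Next I translate the memberships in the inverse maps. By the defining property $y\in Tx\iff x\in T^{-1}y$, we have $x\in F^{-1}(-A^*\lambda)$ if and only if $-A^*\lambda\in F(x)$, which is the same as $0\in F(x)+A^*\lambda$ (a set translated by a single vector). Likewise, $z\in G^{-1}(-B^*\lambda)$ if and only if $0\in G(z)+B^*\lambda$. Together with the third row $0 = Ax+Bz+d_F+d_G$, this is exactly \eqref{primal_inclusion_problem}, read componentwise.

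Both directions then follow by running this chain of equivalences forwards and backwards. The forward direction extracts the witnesses $x,z$ from the Minkowski sum. The reverse direction takes the given $x,z$ satisfying \eqref{primal_inclusion_problem} and exhibits $-Ax-d_F\in D_F(\lambda)$ and $-Bz-d_G\in D_G(\lambda)$, whose sum is $0$.

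I expect no substantive obstacle. The only point needing care is the bookkeeping of the set-valued composition and the sign conventions: I must make sure $-M^*$ is applied first, then $H^{-1}$ (possibly yielding a set or the empty set), then $-M$ elementwise, with $d$ subtracted after. I also need to note that the statement's phrase ``there exists $x\in F^{-1}(-A^*\lambda)$'' is redundant with the first row of \eqref{primal_inclusion_problem}, which the argument above makes explicit.
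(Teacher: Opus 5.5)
Your proposal is correct and follows essentially the same route as the paper, which also writes $D_F(\lambda)=\{-Ax-d_F : x\in F^{-1}(-A^*\lambda)\}$ and $D_G(\lambda)=\{-Bz-d_G : z\in G^{-1}(-B^*\lambda)\}$. The paper then extracts the witnesses $x,z$ from the Minkowski sum for the forward direction, and for the converse exhibits $-Ax-d_F\in D_F(\lambda)$ and $-Bz-d_G\in D_G(\lambda)$ summing to zero.
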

\begin{proof}
    ($\Rightarrow$).
    Observe that $D_F(\lambda)=\{-Ax-d_F:x\in F^{-1}(-A^*\lambda)\}$ and $D_G(\lambda)=\{-Bz-d_G:z\in G^{-1}(-B^*\lambda)\}$.
    Suppose that \eqref{dual_inclusion_problem} holds.
    Then there exists $u_F\in D_F(\lambda)$ and $u_G\in D_G(\lambda)$ such that 
    $$
    0=u_F+u_G=(-Ax-d_F)+(-Bz-d_G)\iff 0= Ax+Bz+d_F+d_G.
    $$
    By how we have expressed $D_F(\lambda)$, we have that $-A^*\lambda\in F(x)$ as required.
    Similarly we also have $0\in G(z)+B^*\lambda$.

    ($\Leftarrow$).
    Suppose there exists $x\in F^{-1}(-A^*\lambda)$ and $z\in G^{-1}(-B^*\lambda)$.
    It follows that $-Ax-d_F\in D_F(\lambda)$ and $-Bz-d_G\in D_G(\lambda)$.
    The third condition of \eqref{primal_inclusion_problem} grants
    $$
    0\in Ax+Bz+d_F+d_F\iff 0\in (-Ax-d_F)+(-Bz-d_G).
    $$
    Since $-Ax-d_F\in D_F(\lambda)$ and $-Bz-d_G\in D_G(\lambda)$, we have $0\in D_F(\lambda)+D_G(\lambda)$. 
\end{proof}
An operator mapping diagram for the mappings in Proposition~\ref{prop:primal_dual_problems} is given in Figure~\ref{fig:operator_domains}, where we identify $\H_x$ and $\H_z$ with their duals via the Riesz representation theorem. Proposition~\ref{prop:primal_dual_problems} generalizes the relationship between \emph{primal} and \emph{dual} problems in the convex optimization setting.
This relationship is exploited and forms the basis for several popular and effective splitting methods \cite{calcan2024Lyapunov,pock2010cp,moursi2019equivalences,poon2019trajectory}.

The Legendre--Fenchel transform of $h:\H\to (-\infty,+\infty]$ is the mapping $h^*(u)\coloneqq \sup_x \langle u , x \rangle - h(u)$. If $F=\partial f$ and $G=\partial g$ for $f\in\Gamma(\H_x)$ and $g\in\Gamma(\H_z)$, and under mild conditions (c.f.~Theorem~\ref{thm:Clarke}), problem \eqref{dual_inclusion_problem} generalizes
\begin{equation}
    \minimize_{\lambda\in\H}\; f^*\left( -A^*\lambda\right) +g^*\left( -B^*\lambda\right) -\big\langle\lambda,d_F+d_G\big\rangle.
    \label{dual_minimisation_problem}\tag{$\ensuremath{\mathcal{D}}$-opt}
\end{equation}
This equivalence follows from the conjugacy relation $\partial h^*=(\partial h)^{-1}$ enjoyed by closed and convex $h$ \cite{bauschke2017monotoneoperators}.
Furthermore, the convexity of $f$ and $g$ ensures the well--known equivalence between \eqref{dual_minimisation_problem} and
\begin{equation}
    \minimize_{(x,z)\in\H_x\times\H_z}\quad f(x)+g(z) \quad\subjectto\quad Ax+Bz+d_F+d_G=0.
    \label{primal_optimisation_problem}\tag{$\ensuremath{\mathcal{P}}$-opt}
\end{equation}
We refer the reader to \cite{boyd2004convexoptimization} for a detailed discussion of Lagrangian duality. 

We say the tuple $(x^*,z^*)$ is a stationary point of \eqref{primal_optimisation_problem} if there exists $\lambda^*\in\H$ such that $(x^*,z^*,\lambda^*)$ satisfies \eqref{primal_inclusion_problem}.
If $F=\partial f$ and $G=\partial g$ for $f\in\Gamma(\H_x)$ and $g\in\Gamma(\H_z)$, and a standard constraint qualification holds e.g. $0\in \mathrm{ri}(A(\dom f)+B(\dom g)+d_F+d_G)$, then any stationary point $(x^*,z^*)$ is a solution to \eqref{primal_optimisation_problem}, and any solution to \eqref{primal_optimisation_problem} is necessarily a stationary point.
Problem duality in the convex optimisation and general operator settings is summarised in Table~\ref{table:problem_equivalences}.
\begin{table}[htb!]
    \centering%
    \begin{tabular}{c p{1.03cm} c}
    \textbf{Primal} && \textbf{Dual} \\
    \midrule
    \shortstack{$\displaystyle \minimize_{(x,z}\; f(x)+g(z)$ subject \\ to $Ax+Bz+d_F+d_G=0$}
    &
    \shortstack{$\overset{\text{(convex)}}{\Longleftrightarrow}$ \\ \color{white}I$^I$\color{black}}
    &
    \shortstack{$\displaystyle \minimize_{\lambda}\; f^*(-A^*\lambda)+$\\ $g^*(-B^*\lambda)-\langle \lambda,d_F+d_G\rangle$ \color{white}I$^I$\color{black}} \\
    &&\\
    \shortstack{find $(x,z,\lambda)$
    such that \\ $0\in
    \begin{bmatrix}
    F(x)+A^*\lambda\\[1pt]
    G(z)+B^*\lambda\\[1pt]
    Ax+Bz+d_F+d_G
    \end{bmatrix}$}
    &
    \shortstack{$\overset{\text{(always)}}{\Longleftrightarrow}$ \\ \color{white}I\color{black}}
    &
    \shortstack{find $\lambda$ such that \\ $0\in D_F(\lambda)+D_G(\lambda)$.}
\end{tabular}
    \caption{Primal and dual problems in the optimisation and operator settings.}%
    \label{table:problem_equivalences}%
\end{table}

We are primarily interested in schemes for \eqref{general_inclusion_problem} that utilise the resolvent operator:
\begin{definition}[Resolvent]
    Let $H:\H\rightrightarrows \H$.
    The \emph{resolvent} of $H$ is the, generically set--valued and possibly empty, operator $J_{H}\coloneqq (I+H)^{-1}$.
\end{definition}
If $H$ is maximal monotone, it is well known that its resolvent is nonempty, single--valued, and \emph{firmly nonexpansive} \cite{bauschke2017monotoneoperators}: $\|J_{H}x-J_{H}y\|^2\leq \langle x-y,J_{ H}x-J_{H}y\rangle$ for all $x,y\in\dom(H)$. The latter property is strictly stronger than nonexpansivity: $\| J_Hx-J_Hy\| \leq \| x-y \|$ for all $x,y\in\dom(H)$.
\begin{definition}[Selection operator]
    The mapping $S_H:\H\to \H$ is a \emph{selection operator} for $J_H:\H\rightrightarrows \H$ if it satisfies the following conditions:
    \begin{enumerate}[label=(\roman*)]
        \item $\left( J_H(x) = \emptyset\right)  \quad\implies\quad \left( S_H(x) = \emptyset\right) $; and
        \item $\left( J_H(x) \neq \emptyset\right)  \quad\implies\quad \left( S_H(x) \in J_H(x)\right) $.
    \end{enumerate}
\end{definition}
In practice, selection operators are often utilized in cases where resolvents are set valued.
We note that the selection operators considered herein are deterministic in the sense that the same element of $J_H(y)$ is always chosen.
If $h\in\Gamma(\H)$ and $\gamma>0$, $J_{\gamma (\partial h)}$ agrees with the \emph{proximal operator} of $h$:
$$
J_{\gamma (\partial h)}(x)=\prox_{\gamma h}(x)\coloneqq \argmin_{y\in\H} \left\{ h(y)+\frac{1}{2\gamma}\big\|y-x\|^2 \right\}.
$$
If $h$ is nonconvex, its proximal operator can be used as a selection operator for $J_{\gamma (\partial h)}$ as follows. 
\begin{proposition}\label{prop:prox_is_selection}
    Suppose $h$ is locally Lipschitz near every point of its domain.
    Let $\gamma>0$.
    Then a selection of $\prox_{\gamma h}$ is a selection mapping for the resolvent of the Clarke subdifferential $J_{\gamma (\partial h)}$.
\end{proposition}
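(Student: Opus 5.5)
We must show that any selection $S$ of $\prox_{\gamma h}$ satisfies the two conditions in the definition of a selection operator for $J_{\gamma(\partial h)}=(I+\gamma\partial h)^{-1}$. The core of the argument is Fermat's rule for the Clarke subdifferential together with the sum rule for a locally Lipschitz function plus a smooth function. Fix $x\in\H$ and suppose $y\in\prox_{\gamma h}(x)$. Then $y$ is a global minimizer of $\phi(u)\coloneqq h(u)+\frac{1}{2\gamma}\|u-x\|^2$. In particular $y\in\dom h$, so $h$ is Lipschitz near $y$, and $\phi$ is Lipschitz near $y$ as well. By the Clarke Fermat rule, $0\in\partial\phi(y)$. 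Since the quadratic term is continuously differentiable, the Clarke sum rule holds with equality:
$$\partial\phi(y)=\partial h(y)+\tfrac{1}{\gamma}(y-x).$$
Hence $x-y\in\gamma\,\partial h(y)$, i.e.\ $x\in(I+\gamma\partial h)(y)$, which gives $y\in J_{\gamma(\partial h)}(x)$. This proves the inclusion $\prox_{\gamma h}(x)\subseteq J_{\gamma(\partial h)}(x)$ for every $x$.

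With this inclusion in hand, the two selection conditions are handled as follows. For (ii): if $J_{\gamma(\partial h)}(x)\neq\emptyset$ and $\prox_{\gamma h}(x)\neq\emptyset$, then $S(x)\in\prox_{\gamma h}(x)\subseteq J_{\gamma(\partial h)}(x)$. For (i): if $J_{\gamma(\partial h)}(x)=\emptyset$, the inclusion forces $\prox_{\gamma h}(x)=\emptyset$, so the selection returns $\emptyset$.

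The main obstacle is the mismatch in nonemptiness. Condition (ii) requires $S(x)$ to lie in $J$ whenever $J(x)\neq\emptyset$, yet $\prox_{\gamma h}(x)$ could be empty while $J(x)$ is not: a point can be a Clarke stationary point of $\phi$ without $\phi$ attaining its infimum. I would deal with this in one of two ways. The first is to interpret the statement as asserting only the inclusion above, so that the selection is well-defined and lands in $J$ wherever the prox is nonempty. The second is to add prox-boundedness together with lower semicontinuity, so that $\phi$ is coercive. In finite dimensions this gives $\prox_{\gamma h}(x)\neq\emptyset$ for all $x$, and (ii) then follows directly. I would state this caveat explicitly and note that the Lipschitz-near-domain hypothesis is used exactly to invoke the Fermat and sum rules at $y$.
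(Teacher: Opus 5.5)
Your argument is the paper's own proof: the Clarke Fermat rule for $u\mapsto h(u)+\frac{1}{2\gamma}\|u-x\|^2$, combined with the exact sum rule for the smooth quadratic, gives $\prox_{\gamma h}(x)\subseteq J_{\gamma(\partial h)}(x)$, and that inclusion is all the paper establishes. Your caveat about condition (ii) is genuine and the paper skips it (e.g.\ $h(u)=-u^2$ with $\gamma>\frac12$ has $\prox_{\gamma h}(x)=\emptyset$ while $J_{\gamma(\partial h)}(x)=\{x/(1-2\gamma)\}$). Your prox-boundedness repair works once you also require $\gamma$ to lie below the prox-threshold of $h$, which is automatic for the nonnegative MCP and SCAD used later.
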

\begin{proof}
    Because $h$ is locally Lipschitz, so also is the function $u \mapsto h(u) + \frac{1}{2\gamma}\|u-x\|^2$, wherefore any minimizer must satisfy the Clarke necessary optimality condition $0 \in \partial(h + \frac1{2\gamma}\|\cdot-x\|^2)(u) = \partial h(u) + \gamma^{-1}(u-x)\implies u \in (I+\gamma \partial h)^{-1}(x)$.
\end{proof}

\section{A Moreau--like identity for resolvents of set-valued mappings}\label{sec:Moreau_identity}

\begin{definition}[Generalized resolvent]\label{def:generalised_resolvents}
    Let $H:\H_1\rightrightarrows \H_1$ and $U:\H_1\rightarrow \H$.
    The \emph{generalized resolvent} of $H$ is the mapping 
    $$
    J_{H}^U \coloneqq (U^* U + H)^{-1}.
    $$
\end{definition}
In the following theorem we establish a connection between the dual resolvent of an operator $\smash{J_{\gamma D_H}}$ and the generalized resolvent $\smash{J^U_{\gamma^{-1}H}}$.
This is, to the author's knowledge, the most general expression analogous to Moreau's identity in the operator setting.
This expression may be used to develop dualities between resolvent methods, as we will see.
\begin{theorem}[Generalized Moreau identity]\label{thm:Moreaus_identity}
    Let $H:\H_1\rightrightarrows\H_1$, $M:\H_1\rightarrow\H$ be linear and $d\in\H$.
    Let $\gamma >0$ and $D_H=(-M\circ H^{-1}\circ-M^*)(\cdot)- d$.
    Then
    \begin{equation}
    J_{\gamma D_H}(u) = u + \gamma d + \gamma M J^M_{\gamma^{-1} H}(-M^*(\gamma^{-1}u +d))
    \qquad\forall u\in\H.
    \label{GMI}\tag{GMI}
    \end{equation}
    This identity holds even when the operators $J_{\gamma D_H}$ and $J^M_{\gamma^{-1} H}$ do not have full domain; i.e. may have empty images.
    Furthermore, if $\lambda\in J_{\gamma D_H}(u)$, there exists $p,q\in \H$ such that the following hold:
    \begin{enumerate}[label=(\roman*)]
        \item $p\in D_H(\lambda)$ and $u=\gamma p+\lambda$; \label{GMI_item1}
        \item $q\in H^{-1}(-M^*\lambda)$ and $p=-Mq-d$; \label{GMI_item2}
        \item $q\in J^M_{\gamma^{-1} H}(-M^*(\gamma^{-1}u+d))$. \label{GMI_item3}
    \end{enumerate}
\end{theorem}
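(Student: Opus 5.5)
The plan is to prove \eqref{GMI} as an equality of sets by unwinding the definitions. Both sides are then characterized through the same auxiliary variable $q\in\H_1$. Items \ref{GMI_item1}--\ref{GMI_item3} will come out as a by-product of the inclusion ``$\subseteq$''. Throughout, for a set $S$ we read $u+\gamma d+\gamma M S$ as $\{u+\gamma d+\gamma Mq : q\in S\}$, which is empty exactly when $S$ is empty; this is why no domain assumptions are needed.

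\emph{Step 1 (``$\subseteq$'').} Let $\lambda\in J_{\gamma D_H}(u)$, i.e.\ $u\in\lambda+\gamma D_H(\lambda)$. Then there is $p\in D_H(\lambda)$ with $u=\gamma p+\lambda$, which is \ref{GMI_item1}. By the definition of $D_H$, and exactly as in the proof of Proposition~\ref{prop:primal_dual_problems}, $D_H(\lambda)=\{-Mq-d : q\in H^{-1}(-M^*\lambda)\}$. Hence there is $q$ with $-M^*\lambda\in H(q)$ and $p=-Mq-d$, which is \ref{GMI_item2}. Substituting gives $\lambda=u-\gamma p=u+\gamma d+\gamma Mq$, which already has the form of the right-hand side. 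It remains to check \ref{GMI_item3}. Multiply $-M^*\lambda\in H(q)$ by $\gamma^{-1}$ and insert $\lambda=u+\gamma d+\gamma Mq$:
\[
-M^*(\gamma^{-1}u+d)-M^*Mq\in\gamma^{-1}H(q),
\]
that is, $-M^*(\gamma^{-1}u+d)\in (M^*M+\gamma^{-1}H)(q)$. This says precisely that $q\in J^M_{\gamma^{-1}H}(-M^*(\gamma^{-1}u+d))$. Hence $\lambda$ belongs to the right-hand side of \eqref{GMI}.

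\emph{Step 2 (``$\supseteq$'').} Take $q\in J^M_{\gamma^{-1}H}(-M^*(\gamma^{-1}u+d))$ and set $\lambda:=u+\gamma d+\gamma Mq$. Reversing the computation of Step~1, we have $-M^*(\gamma^{-1}u+d)-M^*Mq=-\gamma^{-1}M^*\lambda$. This lies in $\gamma^{-1}H(q)$, so $q\in H^{-1}(-M^*\lambda)$. Set $p:=-Mq-d$. Then $p\in D_H(\lambda)$ and $\lambda+\gamma p=u$, so $u\in(I+\gamma D_H)(\lambda)$, i.e.\ $\lambda\in J_{\gamma D_H}(u)$.

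The only delicate point is bookkeeping, not analysis. One must keep the scaling $\gamma$ versus $\gamma^{-1}$ straight and use linearity of $M^*$ to pull $\gamma^{-1}$ through: $\gamma^{-1}M^*(u+\gamma d+\gamma Mq)=M^*(\gamma^{-1}u+d)+M^*Mq$. One must also note that every step is an equivalence of inclusions between set-valued maps. In particular, neither side requires single-valuedness or nonemptiness: if one side is empty, the equivalences show the other is too.
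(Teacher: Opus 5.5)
Your proposal is correct and follows essentially the same route as the paper's proof. Both show the set equality by two inclusions built on the auxiliary point $q\in H^{-1}(-M^*\lambda)$ with $p=-Mq-d$, rewrite $-M^*\lambda\in H(q)$ via $\lambda=u+\gamma d+\gamma Mq$ as $-M^*(\gamma^{-1}u+d)\in(M^*M+\gamma^{-1}H)(q)$, and reverse the computation for ``$\supseteq$''; your direct substitution is a slightly streamlined form of the paper's chain of equivalences, and you correctly place $q$ in $\H_1$ rather than $\H$.
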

\begin{proof}
    The proof uses straightforward manipulations of set--valued operators and linear maps.
    To show the result, it suffices to fix any $u \in \H$ and show the set equality:
    $$
    (I+\gamma D_H)^{-1}(u) = u + \gamma d + \gamma M(M^*M+\gamma^{-1}H)^{-1}(-M^*(\gamma^{-1} u +d)).
    $$
    This is what we will do.
    
    ($\subseteq$).
    The result is vacuous if $J_{\gamma D_H}(u) = \emptyset$.
    Let $\lambda \in J_{\gamma D_H}(u)$, which implies $u=\gamma p+\lambda$ for some $p \in D_H(\lambda)$.
    This shows \ref{GMI_item1}.
    Notice $p\in D_H(\lambda)$ implies the existence of $q \in H^{-1}(-M^*\lambda)$ such that $p=-Mq-d$.
    This shows \ref{GMI_item2}.
    It follows that
    \begin{align*}
        q \in H^{-1}(-M^*\lambda) \quad\iff& \quad Hq \ni-M^* \lambda \\
        \iff& \quad (M^*M+\gamma^{-1}H)q \ni M^*Mq -\gamma^{-1}M^* \lambda\\
        \iff& \quad q \in (M^*M+\gamma^{-1}H)^{-1}(-M^*(\gamma^{-1}\lambda-Mq))\\
        \iff& \quad q\in (M^*M+\gamma^{-1}H)^{-1}(-M^*(\gamma^{-1}\lambda+p+d))\\
        \iff& \quad q\in (M^*M+\gamma^{-1}H)^{-1}(-M^*(\gamma^{-1}\lambda+\gamma^{-1}(u-\lambda)+d))\\
        \iff& \quad q\in (M^*M+\gamma^{-1}H)^{-1}(-M^*(\gamma^{-1}u+d)),
    \end{align*}
    with the final line showing \ref{GMI_item3}. Altogether we have
    \begin{align*}
        \lambda &= u - \gamma p = u-\gamma(-Mq-d) \in u + \gamma d+\gamma M(M^*M+\gamma^{-1}H)^{-1}(-M^*(\gamma^{-1}u+d)).
    \end{align*}
    This completes the proof of ($\subseteq$).

    ($\supseteq$).
    If $u+\gamma d+\gamma M(M^*M+\gamma^{-1}H)^{-1}(-M^*(\gamma^{-1}u+d))=\emptyset$, then the result is vacuous.
    Otherwise, there exists $q \in (M^*M+\gamma^{-1}H)^{-1}(-M^*(\gamma^{-1}u+d))$. Thus:
    \begin{align*}
        q \in (M^*M+\gamma^{-1}&H)^{-1}(-M^*(\gamma^{-1}u+d)) \\
        \iff& \quad  \gamma^{-1}Hq + M^*Mq \ni -M^*(\gamma^{-1}u+d)\\
        \iff&\quad Hq\ni -M^*(u+\gamma d)-\gamma M^*Mq\\
        \iff&\quad  q \in H^{-1}(-M^*(u+\gamma d+\gamma Mq))\\
        \iff&\quad -\gamma Mq-\gamma d \in -\gamma M(H^{-1}(-M^*(u+\gamma d+\gamma Mq)))-\gamma d\\
        \iff& \quad  -\gamma Mq-\gamma d\in \gamma D_H(u+\gamma d+\gamma Mq)\\
        \iff& \quad -\gamma Mq-\gamma d + (u+\gamma d+\gamma Mq) \in (I+\gamma D_H)(u+\gamma d+\gamma Mq)\\
        \iff&\quad u\in (I+\gamma D_H)(u+\gamma d+\gamma Mq)\\
        \iff& \quad J_{\gamma D_H}(u) \ni u+\gamma d+\gamma Mq.
    \end{align*}
    This establishes the direction $(\supseteq)$, concluding the proof.
\end{proof}
\begin{remark}
    Specifying $M=-I$, $d=0$ and $H = \partial h$ for $h\in\Gamma(\H)$, one sees that \eqref{GMI} is a generalisation of Moreau's proximal identity \cite{moreau1965proximal}: 
    \begin{equation}
    \prox_{\gamma h^*}(u) =u - \gamma \prox_{\gamma^{-1}h}(\gamma^{-1}u).
    \label{Moreau_identity}
    \end{equation}
\end{remark}

The following corollary is an immediate consequence of Theorem~\ref{thm:Moreaus_identity} and shows that \eqref{GMI} necessarily holds for selections.
\begin{corollary}\label{selection_GMI_corollary}
    If $S^M_{\gamma^{-1}H}$ is a selection operator for $J^M_{\gamma^{-1}H}$, then there exists a selection mapping $S_{\gamma D_H}$ for $J_{\gamma D_H}$ that satisfies
    \begin{equation}
        S_{\gamma D_H}(u) = u + \gamma d+ \gamma M S_{\gamma^{-1}H}^M(-M^*(\gamma^{-1}u+ d))
        \qquad\forall u\in\H.
    \label{GMI_selections}
    \end{equation}
\end{corollary}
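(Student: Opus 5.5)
We define the candidate selection directly by the right-hand side of \eqref{GMI_selections}. Concretely, for each $u\in\H$ we set
$$
S_{\gamma D_H}(u) \coloneqq u + \gamma d + \gamma M S^M_{\gamma^{-1}H}\big(-M^*(\gamma^{-1}u+d)\big),
$$
with the convention that $S_{\gamma D_H}(u)=\emptyset$ whenever $S^M_{\gamma^{-1}H}(-M^*(\gamma^{-1}u+d))=\emptyset$. This matches the set-arithmetic convention already used in Theorem~\ref{thm:Moreaus_identity}, where adding a point to an empty set gives the empty set. The identity \eqref{GMI_selections} then holds by construction. What remains is to check the two conditions in the definition of a selection operator for $J_{\gamma D_H}$. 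Since $S^M_{\gamma^{-1}H}$ is deterministic, so is $S_{\gamma D_H}$.

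Fix $u\in\H$ and write $v\coloneqq -M^*(\gamma^{-1}u+d)$. By \eqref{GMI} in Theorem~\ref{thm:Moreaus_identity}, $J_{\gamma D_H}(u)$ is the image of $J^M_{\gamma^{-1}H}(v)$ under the affine map $q\mapsto u+\gamma d+\gamma Mq$. An image of a set under a map is empty exactly when the set is empty, so $J_{\gamma D_H}(u)=\emptyset$ if and only if $J^M_{\gamma^{-1}H}(v)=\emptyset$.

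\emph{Condition (i).} Suppose $J_{\gamma D_H}(u)=\emptyset$. Then $J^M_{\gamma^{-1}H}(v)=\emptyset$. By property (i) of $S^M_{\gamma^{-1}H}$, we get $S^M_{\gamma^{-1}H}(v)=\emptyset$, and hence $S_{\gamma D_H}(u)=\emptyset$.

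\emph{Condition (ii).} Suppose $J_{\gamma D_H}(u)\neq\emptyset$. Then $J^M_{\gamma^{-1}H}(v)\neq\emptyset$, so $q\coloneqq S^M_{\gamma^{-1}H}(v)\in J^M_{\gamma^{-1}H}(v)$. Applying the set equality \eqref{GMI} gives $u+\gamma d+\gamma Mq\in J_{\gamma D_H}(u)$, which is exactly $S_{\gamma D_H}(u)\in J_{\gamma D_H}(u)$.

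The only real subtlety is bookkeeping: the formula must inherit emptiness correctly, so the empty-image convention has to be stated explicitly and the equivalence of emptiness must be read off from \eqref{GMI}. Beyond that, the proof is a direct application of Theorem~\ref{thm:Moreaus_identity}.
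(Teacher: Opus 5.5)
Your proof is correct and is essentially the paper's argument. The paper's one-line proof says to rerun the proof of Theorem~\ref{thm:Moreaus_identity} with the selected point $q=S^M_{\gamma^{-1}H}(v)$ in place of an arbitrary element, which is what your appeal to the ($\supseteq$) half of \eqref{GMI} does. Your explicit check that $J_{\gamma D_H}(u)$ and $J^M_{\gamma^{-1}H}(v)$ are empty simultaneously, which condition (i) needs, is a detail the paper leaves implicit.
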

\begin{proof}
    The proof is that of Theorem~\ref{thm:Moreaus_identity}, with equalities replacing inclusions.
\end{proof}

\section{A general Gabay duality for DR}\label{sec:duality}

DR is employed for problems of the form \eqref{general_inclusion_problem} under the typical assumption that $\A,\B:\H\rightrightarrows\H$ are maximal monotone \cite{mercier1979dr}.
The method utilises the reflected resolvent $R_{\gamma H}\coloneqq 2J_{\gamma H}-I$ of an operator $H$.
\begin{theorem}[DR] \label{thm:DR}
    Let $\gamma>0$ and $\A,\B:\H\rightrightarrows\H$.
    Suppose that $R_{\gamma\A}$ and $R_{\gamma\B}$ are nonexpansive.
    DR generates sequence $(y^k)$ according to 
    \begin{equation}
    y^{k+1}\in T_{\A,\B}(y^k)
    ,\quad\text{where}\quad
    T_{\A,\B}\coloneqq \frac{1}{2}\left( I+ R_{\gamma \A}\circ R_{\gamma \B}\right).
    \label{DRM}
    \end{equation}
    This sequence converges weakly to some $u \in \Fix T_{\A,\B}$ (assuming $\Fix T_{\A,\B}\neq \emptyset$) such that $0\in \A(J_{\gamma \B}u)+\B(J_{\gamma \B}u)$.
\end{theorem}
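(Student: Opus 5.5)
The plan is to reduce the statement to the Krasnosel'skiĭ--Mann theorem for averaged operators, and then to recover the solution property from the resolvent definitions.

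\textbf{Step 1: averagedness.} Since $R_{\gamma\A}$ and $R_{\gamma\B}$ are nonexpansive, they are single-valued and everywhere defined, as implicitly assumed. Hence $J_{\gamma\A}=\frac12(R_{\gamma\A}+I)$ and $J_{\gamma\B}$ are single-valued with full domain. The composition $N\coloneqq R_{\gamma\A}\circ R_{\gamma\B}$ is nonexpansive, so $T_{\A,\B}=\frac12(I+N)$ is $\frac12$-averaged. By the identity $\|Tx-Ty\|^2+\|(I-T)x-(I-T)y\|^2\le\|x-y\|^2$, which holds for $\frac12$-averaged maps, $T_{\A,\B}$ is firmly nonexpansive.

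\textbf{Step 2: convergence.} Take $\bar y\in\Fix T_{\A,\B}$. Firm nonexpansiveness gives Fej\'er monotonicity, $\|y^{k+1}-\bar y\|^2+\|y^k-y^{k+1}\|^2\le\|y^k-\bar y\|^2$. Therefore $(y^k)$ is bounded and $y^k-T_{\A,\B}y^k\to0$. By the demiclosedness principle, $I-T_{\A,\B}$ is demiclosed at $0$ because $T_{\A,\B}$ is nonexpansive, so every weak cluster point lies in $\Fix T_{\A,\B}$. Opial's lemma then yields weak convergence of the whole sequence to some $u\in\Fix T_{\A,\B}$. I expect this to be the main technical step. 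Nothing about monotonicity is needed, only nonexpansiveness of $N$, so the standard Hilbert-space argument applies verbatim.

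\textbf{Step 3: characterizing fixed points.} Set $b=J_{\gamma\B}u$. Then $u\in b+\gamma\B b$, and $R_{\gamma\B}u=2b-u$. The fixed-point condition $u=T_{\A,\B}u$ is equivalent to $R_{\gamma\A}(2b-u)=u$. This says $J_{\gamma\A}(2b-u)=b$, and since $J_{\gamma\A}=(I+\gamma\A)^{-1}$ this means $2b-u\in b+\gamma\A b$. Adding the two inclusions gives
\[
u+(2b-u)\in 2b+\gamma\A b+\gamma\B b,
\]
which simplifies to $0\in\A(b)+\B(b)$. Each step only uses the inverse-relation definition of the resolvent, so it is valid for set-valued $\A,\B$.

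The only subtlety is that weak convergence of $(y^k)$ does not imply convergence of $J_{\gamma\B}y^k$. However, the statement only asserts the solution property of $J_{\gamma\B}u$ at the limit $u$, and Step 3 delivers exactly that.
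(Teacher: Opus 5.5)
Your proposal is correct and follows the same route as the paper: nonexpansiveness of $R_{\gamma\A}\circ R_{\gamma\B}$ makes $T_{\A,\B}$ $\frac12$-averaged and hence firmly nonexpansive, which yields weak convergence of the iterates to a fixed point, and the fixed point is then characterized through the resolvent inclusions. The only difference is that the paper cites these two facts (Bauschke--Combettes, Example~5.18, and Lions--Mercier), whereas you prove them directly: convergence via Fej\'er monotonicity, demiclosedness and Opial's lemma, and the characterization by adding the inclusions $u\in b+\gamma\B b$ and $2b-u\in b+\gamma\A b$.
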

\begin{proof}
    The composition $R_{\gamma\A}\circ R_{\gamma\B}$ is nonexpansive by the nonexpansiveness of $R_{\gamma\A}$ and $R_{\gamma\B}$.
    It follows that $T_{\A,\B}$ is $1/2$-averaged and firmly nonexpansive.
    Since (by assumption) $\Fix T_{\A,\B}\neq\emptyset$, the sequence $(y^k)_k$ converges weakly to $u\in\Fix T_{\A,\B}$ (see, for example, \cite[Example~5.18]{bauschke2017monotoneoperators}).
    It is well known and straightforward to verify that $u\in\Fix T_{\A,\B}$ is such that $J_{\gamma \B}u\in\zeroset(\A+\B)$ \cite{mercier1979dr}.
\end{proof}

The nonexpansiveness of $R_{\gamma\A}$ and $R_{\gamma\B}$ is automatic when $\A$ and $\B$ are maximal monotone. A combination of theoretical and practical results demonstrate DR's aptitude for non maximal monotone problems where the reflections lack nonexpansiveness \cite{lindstrom2021survey}.
These include the sphere and line \cite{aragon2013global}, graph coloring \cite{aragon2018solving}, and matrix completion problems \cite{aragon2014douglas,borwein2012reflection}. DR convergence analyses in the nonconvex setting often rely on transversality or metric subregularity. These properties were used to show linear convergence for specific problems in \cite{dao2019linear, hesse2013nonconvex, phan2016linear}.

It is common in practice to attempt to solve \eqref{primal_optimisation_problem} by approaching \eqref{dual_inclusion_problem} with DR.
This is less common if \eqref{primal_optimisation_problem} is nonconvex, since solution equivalence is no longer guaranteed.
It was shown in \cite{gabay1983duality} that if $f\in\Gamma(\H_x)$, $g\in\Gamma(\H_z)$, $B=-I$ and $d_F=d_G=0$, applying DR to \eqref{dual_minimisation_problem} is equivalent to applying ADMM to \eqref{primal_optimisation_problem}.

ADMM is a popular splitting method that addresses \eqref{primal_optimisation_problem} according to
\begin{subequations}
    \label{ADMM_updates}
    \begin{align}
    x^{k+1}&\in \argmin_{x\in\H_x} \left\{ f(x)+\frac{\rho}{2}\big\|Ax+Bz^k+\rho^{-1}\lambda^k+d_F+d_G\big\|^2 \right\}, \label{ADMM_x_update}
    \\
    z^{k+1}&\in \argmin_{z\in\H_z} \left\{ g(z)+\frac{\rho}{2}\Big\|Ax^{k+1}+Bz+\rho^{-1}\lambda^k+d_F+d_G\big\|^2 \right\}, \label{ADMM_z_update}
    \\
    \lambda^{k+1}&=\lambda^k+\rho\left( Ax^{k+1}+Bz^{k+1}+d_F+d_G\right)  , \label{ADMM_dual_update}
    \end{align}
\end{subequations}
for some given $\rho>0$.

In the maximal monotone setting, the DR sequence $(y^k)$ is known to be linked to the ADMM sequence $(x^k,z^k,\lambda^k)$ by the expressions in Table~\ref{tab:gabay_identities}, see \cite{lindstrom2022computable}. We find that these identities, which we call the \emph{Gabay identities}, also hold in general. We extend Gabay's result in Theorem~\ref{thm:GMI_duality}, wherein we show that DR applied to \eqref{dual_inclusion_problem} induces a primal algorithm akin to ADMM for \eqref{primal_inclusion_problem}. Our result is a generalization in the sense that we account for sequences of set--valued operators and the constraint operators are only assumed to be linear.

\begin{table}
    \small
    \centering
    \begin{tabular}{c}
    \textbf{ADMM} \\
    \hline
    $\rho Ax^{k+1}\in J_{\rho D_{F^{k+1}}} (R^k_{\rho D_{G^k}}y^k)-R^k_{\rho D_{G^k}}y^k-\rho d_{F^{k+1}}$ \\
    $\rho Bz^k=\lambda^k-y^k-\rho d_{G^k}$ \\
    $\lambda^k\in J_{\rho D_{G^k}}y^k$ \\
    \textbf{DR} \\
    \hline
    $y^k=\lambda^{k-1}+\rho(Ax^k+d_{F^{k+1}})$\\
    $R^k_{\rho D_{G^k}}y^k=\lambda^k+\rho(Bz^k+d_{G^k})$\\
    $J_{\rho D_{F^{k+1}}} (R^k_{\rho D_{G^k}}y^k)\ni\lambda^k+\rho(Ax^{k+1}+Bz^k+d_{F^{k+1}}+d_{G^k})$
    \end{tabular}
    \caption{Expressions connecting ADMM and DR iterates.}
    \label{tab:gabay_identities}
\end{table}

\begin{theorem}\label{thm:GMI_duality}
    Let $(F^k)$ and $(G^k)$ be sequences of operators with $F^{k}:\H_x\rightrightarrows\H_x$ and $G^k:\H_z\rightrightarrows\H_z$.
    Let $A:\H_x\rightarrow \H$ and $B:\H_z\rightarrow\H$ be linear and $d_{F^{k}},d_{G^k} \in\H$.
    Let $D_{F^k}$ denote the dual operator of $F^k$ with respect to $A$ and $d_{F^k}$ and $D_{G^k}$ denote the dual operator of $G^k$ with respect to $B$ and $d_{G^k}$.
    The DR sequence given by
    $$
    y^{k+1}\in  \frac{1}{2}\left(I+ R_{\rho D_{F^{k+1}}}\circ R_{\rho D_{G^k}}\right)y^k
    $$
    induces a sequence $(x^k,z^k,\lambda^k)$ satisfying
    \begin{subequations}
    \label{primal_alg_updates}
    \begin{align}
            x^{k+1}&\in J^A_{\rho^{-1}F^{k+1}}\left( -A^*(Bz^k+\rho^{-1}\lambda^k+d_{F^{k+1}}+d_{G^k})\right) , \label{primal_alg_x_update}\\
            z^{k+1}&\in J^B_{\rho^{-1}G^k}\left( -B^*(Ax^{k+1}+\rho^{-1}\lambda^k+d_{F^{k+1}}+d_{G^k})\right) , \label{primal_alg_z_update}\\
            \lambda^{k+1}&=\lambda^k+\rho(Ax^{k+1}+Bz^{k+1}+d_{F^{k+1}}+d_{G^k}).\label{primal_alg_dual_update}
        \end{align}
    \end{subequations}
    Furthermore, the sequences $(x^k,z^k,\lambda^k)_k$ and $(y^k)_k$ are connected by the Gabay identities of Table~\ref{tab:gabay_identities}.
\end{theorem}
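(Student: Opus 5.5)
The plan is to unroll one DR step into its two resolvent evaluations and translate each into primal variables with Theorem~\ref{thm:Moreaus_identity}, using its items \ref{GMI_item1}--\ref{GMI_item3} to produce explicit witnesses. Throughout, the set-valued iteration is read as follows: each $y^{k+1}$ is generated by choosing particular elements. Concretely, there are $\lambda^k\in J_{\rho D_{G^k}}y^k$ with $R^k_{\rho D_{G^k}}y^k\coloneqq 2\lambda^k-y^k$, and $\mu^{k}\in J_{\rho D_{F^{k+1}}}(2\lambda^k-y^k)$, such that $y^{k+1}=y^k+\mu^k-\lambda^k$. The induced primal sequence is then built from these same choices.

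\textbf{Step 1 ($z$-update and the identity for $Bz^k$).} Apply Theorem~\ref{thm:Moreaus_identity} with $H=G^k$, $M=B$, $d=d_{G^k}$, $\gamma=\rho$, at the point $u=y^k$. It yields $z^k\in J^B_{\rho^{-1}G^k}(-B^*(\rho^{-1}y^k+d_{G^k}))$ with $\lambda^k=y^k+\rho d_{G^k}+\rho Bz^k$. This is exactly the table identity $\rho Bz^k=\lambda^k-y^k-\rho d_{G^k}$. It also gives $R^k_{\rho D_{G^k}}y^k=2\lambda^k-y^k=\lambda^k+\rho(Bz^k+d_{G^k})$.

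\textbf{Step 2 ($x$-update).} Apply Theorem~\ref{thm:Moreaus_identity} with $H=F^{k+1}$, $M=A$, $d=d_{F^{k+1}}$, at the point $u=R^k_{\rho D_{G^k}}y^k$. Substituting the expression from Step~1 for $u$, the resolvent argument becomes
$-A^*(\rho^{-1}u+d_{F^{k+1}})=-A^*(Bz^k+\rho^{-1}\lambda^k+d_{F^{k+1}}+d_{G^k})$.
This is \eqref{primal_alg_x_update}. The same theorem gives $\mu^k=u+\rho d_{F^{k+1}}+\rho Ax^{k+1}$, which is the first ADMM-row entry and the third DR-row entry of Table~\ref{tab:gabay_identities}. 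Then
\[
y^{k+1}=y^k+\mu^k-\lambda^k=\lambda^k+\rho(Ax^{k+1}+d_{F^{k+1}}),
\]
which is the first DR identity after shifting the index.

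\textbf{Step 3 (closing the recursion).} Now use Step~1 at index $k+1$. Substituting $y^{k+1}$ into the $z$-resolvent argument gives
\[
-B^*(\rho^{-1}y^{k+1}+d_{G^{k+1}})=-B^*(Ax^{k+1}+\rho^{-1}\lambda^k+d_{F^{k+1}}+d_{G^{k+1}}).
\]
The multiplier update then follows from
\[
\lambda^{k+1}=y^{k+1}+\rho d_{G^{k+1}}+\rho Bz^{k+1}=\lambda^k+\rho(Ax^{k+1}+Bz^{k+1}+d_{F^{k+1}}+d_{G^{k+1}}).
\]

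\textbf{Main obstacle: the index of $G$ and $d_G$.} Step~3 produces $G^{k+1}$ and $d_{G^{k+1}}$, whereas \eqref{primal_alg_z_update}--\eqref{primal_alg_dual_update} are written with $G^k$ and $d_{G^k}$. The fix I would try is to relabel: the $z$ generated by the $G$-resolvent that acts on $y^{k+1}$ (which is $G^{k+1}$ in the DR formula) is identified with the stated update. This amounts to shifting the $G$-index by one in the primal indexing, consistent with Table~\ref{tab:gabay_identities}, or equivalently to noting that the two labelings agree when the $d_G$ and $G$ sequences are indexed consistently. The remaining table entries then follow by direct substitution. I expect this bookkeeping, together with making sure each choice of element in the set-valued resolvents is used consistently (the witnesses $q$ from items \ref{GMI_item1}--\ref{GMI_item3}), to be the only delicate part; the rest is algebra.
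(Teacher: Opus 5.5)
Your argument is the paper's argument, and Steps 1--3 are correct. Like the paper, you apply Theorem~\ref{thm:Moreaus_identity} once to the $G^k$-resolvent at $y^k$ and once to the $F^{k+1}$-resolvent at the reflected point, take the witnesses from items \ref{GMI_item1}--\ref{GMI_item3}, and substitute. Your derivation of $y^{k+1}=y^k+\mu^k-\lambda^k=\lambda^k+\rho(Ax^{k+1}+d_{F^{k+1}})$ directly from the averaging step is cleaner than the paper's route. The paper instead chooses $\lambda^{-1},y^0,x^0$ and asserts $\lambda^{k-1}=y^k-\rho(Ax^k+d_{F^{k+1}})$. Your computation shows that this subscript should be $F^k$, and the same slip appears in the first DR row of Table~\ref{tab:gabay_identities}. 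So what you obtain is the corrected identity, not literally the table's identity after an index shift.

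Drop the relabeling ``fix'', though. No relabeling reconciles the indices when $G^k$ or $d_{G^k}$ genuinely vary with $k$:
\begin{itemize}
\item The $x$-update \eqref{primal_alg_x_update} pairs $z^k$ with $d_{G^k}$, so $z^k$ must be the $G^k$-witness at $y^k$.
\item As written, \eqref{primal_alg_z_update}--\eqref{primal_alg_dual_update} give $-B^*\lambda^{k+1}\in G^k(z^{k+1})$ and $\lambda^{k+1}=y^{k+1}+\rho(Bz^{k+1}+d_{G^k})$.
\item The DR step at index $k+1$ instead yields a witness in $(G^{k+1})^{-1}(-B^*\lambda^{k+1})$ and the table identity $\rho Bz^{k+1}=\lambda^{k+1}-y^{k+1}-\rho d_{G^{k+1}}$. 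Together with the previous item this forces $d_{G^{k+1}}=d_{G^k}$.
\item Renaming $G^{k+1}$ as $G^k$ makes the $z$-line match, but it puts $d_{G^{k-1}}$ into the $x$-line and $R_{\rho D_{G^{k-1}}}$ into the DR step.
\end{itemize}

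What the stated DR iteration actually induces is exactly your Step~3 output, with $G^{k+1}$ and $d_{G^{k+1}}$ in the $z$- and $\lambda$-updates. This is also what the paper's own proof derives: its displays for $z^k$ and $\lambda^k$ carry $G^k,d_{G^k}$ together with $x^k,\lambda^{k-1}$, and it then simply ``collects results''. So state the conclusion with those indices. It agrees with the theorem verbatim when $G^k$ and $d_{G^k}$ do not depend on $k$.
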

\begin{proof}
    Choose $\lambda^{-1}$, $y^0$ and $x^0$ such that $\lambda^{-1}=y^0-\rho (Ax^0+d_{F^{k+1}})$, from which it follows that $\lambda^{k-1}=y^k-\rho(Ax^k+d_{F^{k+1}})$.
    Let $\lambda^k\in J_{\rho D_{G^k}}y^k$.
    We have by Theorem~\ref{thm:Moreaus_identity} that there exists $v^k\in\H$ and $z^k\in \H_z$ such that the following hold:
    \begin{enumerate}[label=(\roman*)]
        \item $v^k\in D_{G^k}(\lambda^k)$ and $y^k=\rho v^k+\lambda^k$;
        \item $z^k\in (G^k)^{-1} (-B^*\lambda^k)$ and $v^k=-Bz^k-d_{G^k}$;\label{thm:duality_2}
        \item \label{thm:duality_3}$z^k\in J^B_{\rho^{-1} G^k}\left( -B^*(\rho^{-1}y^k+ d_{G^k}) \right) $.
    \end{enumerate}
    Then, let $R_{D_{G^k}}y^k= \lambda^k+\rho(Bz^k+d_{G^k})$ and $\mu^k\in J_{\rho D_{F^k}}(R_{D_{G^k}}y^k)=J_{\rho D_{F^k}}(\lambda^k-\rho v^k)$.
    Owing to Theorem~\ref{thm:Moreaus_identity}, there exists $w^k\in\H$ and $x^{k+1}\in \H_x$ such that
    \begin{enumerate}[start=4,label=(\roman*)]
        \item $w^k\in D_{F^{k+1}}(\mu^k)$ and $\lambda^k-\rho v^k=\rho w^k+\mu^k$;
        \item $x^{k+1}\in (F^{k+1})^{-1}(-A^*\mu^k)$ and $w^k=-Ax^{k+1}-d_{F^{k+1}}$;
        \item \label{thm:duality_6}$x^{k+1}\in J^A_{\rho^{-1}F^{k+1}}(-A^*(\rho^{-1}(\lambda^k-\rho v^k)+ d_{F^{k+1}}))$.
    \end{enumerate}
    It follows from \ref{thm:duality_2} and \ref{thm:duality_6} that
    \begin{multline*}
        \rho^{-1}F^{k+1}(x^{k+1})+ A^*Ax^{k+1}
        \ni
        -A^*(\rho^{-1}(\lambda^k-\rho v^k)+ d_{F^{k+1}})
        \\=
        -A^*(\rho^{-1}\lambda^k+Bz^k+d_{F^{k+1}}+d_{G^k}),
    \end{multline*}
    from which we have
    \begin{equation}
        0\in F^{k+1}(x^{k+1})+\rho A^*(Ax^{k+1}+Bz^k+\rho^{-1}\lambda^k+d_{F^{k+1}}+d_{G^k}).
        \label{x_zero_condition}
    \end{equation}
    We can equivalently express \eqref{x_zero_condition} as $x^{k+1}\in J^A_{\rho^{-1}F^{k+1}}(-A^*(Bz^k+\rho^{-1}\lambda^k+d_{F^{k+1}}+d_{G^k}))$.
    Combining \ref{thm:duality_3} with the fact that $\lambda^{k-1}=y^k-\rho(Ax^k+d_{F^{k+1}})$ yields
    \begin{equation*}
        \rho^{-1}G^k(z^k)+B^*Bz^k \ni -B^*(\rho^{-1}y^k+d_{G^k})=-B^*(\rho^{-1}\lambda^{k-1}+Ax^k+d_{F^{k+1}}+d_{G^k}).
    \end{equation*}
    It follows that $z^k\in J^B_{\rho^{-1}G^k}(-B^*(Ax^k+\rho^{-1}\lambda^{k-1}+d_{F^{k+1}}+d_{G^k}))$.
    Now considering the dual update, we have that $\lambda^k-\rho(Bz^k+d_{G^k})=\lambda^k+\rho v^k=y^k= \lambda^{k-1}+\rho(Ax^k+d_{F^{k+1}})$, which implies $\lambda^k=\lambda^{k-1}+\rho(Ax^k+Bz^k+d_{F^{k+1}}+d_{G^k})$.
    Collecting these results, we see that DR induces primal algorithm \eqref{primal_alg_updates}.
    The one-to-one connection via the Gabay identities concludes the proof.
\end{proof}
The connection between ADMM and DR variables for a single iteration is visualized in Figure~\ref{eqn:DR-ADMM-duality-proof}. The DR updates and intermediate steps are denoted using filled and unfilled circles, respectively.
\begin{remark}
    It is straightforward to verify that if $F=\partial f$ and $G=\partial g$ for $f\in\Gamma(\H_x)$ and $g\in\Gamma(\H_z)$, then \eqref{primal_alg_updates} is equivalent to \eqref{ADMM_updates} and hence DR is equivalent to ADMM.
    We show in Theorem~\ref{thm:primal_convergence}, for a slightly more general case, that if DR converges for \eqref{dual_inclusion_problem} then the sequences of \eqref{primal_alg_updates} converge to a solution of \eqref{primal_inclusion_problem}.
\end{remark}

\begin{figure}[htb]
\centering
\begin{subfigure}{0.8\textwidth}
\begin{tikzpicture}[scale=0.81]

\begin{scope}[rotate=-45]  

\draw[fill] (0, 0) circle (.08cm);

\draw (1, 1) circle (.08cm);
\draw (2, 2) circle (.08cm);
\draw (1, 3) circle (.08cm);
\draw (0, 4) circle (.08cm);
\draw (1, 1) circle (.08cm);
\draw[fill] (0, 2) circle (.08cm);

\draw[<-,blue] (0+.2,0+.2) -- (1-.2, 1-.2);
\draw[->,blue] (1+.2,1+.2) -- (2-.2, 2-.2);
\draw[<-, red] (2-.2,2+.2) -- (1+.2, 3-.2);
\draw[->, red] (1-.2,3+.2) -- (0+.2, 4-.2);
\draw[->,blue] (1-.2,3-.2) -- (0+.2, 2+.2);

\node[font=\small,above,blue] at (0.5,0.5) {$\rho v^k$};
\node[font=\small,above,blue] at (1.5,1.5) {$-\rho v^k$};
\node[font=\small,left,red] at (1.5,2.5) {$\rho w^k$};
\node[font=\small,left,red] at (0.5,3.5) {$-\rho w^k$};
\node[font=\small,below,blue] at (0.55,2.6) {$\rho v^k$};

\node[font=\small,left] at (0,0) {$y^k = \lambda^k+\rho v^k$};
\node[font=\small,below] at (1,1) {$J_{\rho D_{G^k}}y^k = \lambda^k$};
\node[font=\small,right] at (2,2) {$R_{\rho D_{G^k}}y^k = \lambda^k-\rho v^k = \mu^k + \rho w^k$};
\node[font=\small,right] at (1,3) {$\begin{aligned}J_{\rho D_{F^{k+1}}} R_{\rho D_{G^k}}y^k &= J_{\rho D_{F^{k+1}}}(\lambda^k-\rho v^k) \\&= \mu^k\end{aligned}$};
\node[font=\small,right] at (0,4) {$R_{\rho D_{F^{k+1}}}R_{\rho D_{G^k}}y^k = \mu^k - \rho w^k$};
\node[font=\small,left] at (0,2) {$\begin{array}{c }y^{k+1} = \mu^k + \rho v^k\\
\text{(case $\rho_k =1$)}\end{array}$};
\end{scope}
\end{tikzpicture}
\caption{}
\label{eqn:DR-ADMM-duality-proof}
\end{subfigure}
\hfill
\begin{subfigure}{0.18\textwidth}
\begin{tikzpicture}[>=stealth, thick,scale=0.32]
\node (H) at (-2.5,0) {$\H$};
\node (Hstar) at (2.5,0) {$\H^*$};

\node (Hz)     at (-2.5,-4) {$\H_z$};
\node (Hzstar)     at ( 2.5,-4) {$\H_z^*$};
\node (Hx) at (-2.5, 4) {$\H_x$};
\node (Hxstar) at (2.5,4) {$\H_x^{*}$};

\node at (1.8,2) {$A^*$};
\node at (-2,2) {$A$};
\node at (1.8,-2) {$B^*$};
\node at (-2,-2) {$B$};

\draw[->] (Hx) to (H);
\draw[->] (-1.5,4.3) to node[midway,below,yshift=-3pt] {$F^{-1}$} (1.5,4.3);

\draw[->] (1.5,3.8) to node[midway,above,yshift=3pt] {$F$} (-1.5,3.8);

\draw[->] (Hz) to (H);
\draw[->] (-1.5,-3.7) to node[midway,above,yshift=-1pt] {$G$} (1.5,-3.7);

\draw[->] (1.5,-4.2) to node[midway,below,yshift=2pt] {$G^{-1}$} (-1.5,-4.2);

\draw[->] (Hstar) to (Hxstar);
\draw[->] (Hstar) to (Hzstar);

\end{tikzpicture}
\caption{}
\label{fig:operator_domains}
\end{subfigure}
\caption{(\subref{eqn:DR-ADMM-duality-proof}) DR expressed using primal variables. (\subref{fig:operator_domains}) An operator mapping diagram for the single--valued case of $F$ and $G$ and $A$ and $B$ and their adjoints.}
\label{fig:duality_and_domains}
\end{figure}

Theorem~\ref{thm:GMI_duality} demonstrates---provided the updates are defined---Gabay's duality-like relationship always holds, regardless of the properties of $F,G$ or the linear maps $A$ and $B$.
Gabay's identities reveal that at each iteration $k$, a primal algorithm tuple $(x^k,z^k,\lambda^k)$ is computed at no added cost when DR is applied to \eqref{dual_inclusion_problem}.
This relationship enables methods that extrapolate on the sequence $(y^k)$, such as those in \cite{calcan2024Lyapunov,lindstrom2022computable,poon2019trajectory}.
An immediate application of Theorem~\ref{thm:GMI_duality} is that such extrapolation schemes may now be applied to nonmonotone inclusion problems.
These identities also show that ADMM may be interpreted as being \emph{induced} by DR, or vice--versa.

\begin{remark}
    Theorem~\ref{thm:GMI_duality} establishes the connection between DR and a method that generalizes ADMM. Algorithm \eqref{primal_alg_updates} generalizes ADMM in the sense that updates \eqref{primal_alg_x_update} and \eqref{primal_alg_z_update} correspond to locating stationary points of \eqref{ADMM_x_update} and \eqref{ADMM_z_update} if $F=\partial f$ and $G=\partial g$ for proper and lower semicontinuous $f$ and $g$.
    Additionally, $A$ and $B$ are general, not necessarily injective, linear operators.
    This allowance is important in stochastic programming, where the operators in question (often described implicitly) are generally not injective but consist of blocks of identity and zero matrices.
\end{remark}

\subsection{Gabay duality in practice}\label{ex:basis_pursuit}

We now illustrate the connection between DR and its induced primal algorithm with an example.
This example also explicitly shows that DR may fail to converge for $1$--weakly convex problems.
Let us consider the basis pursuit problem, which has applications in signal recovery; see, for example, \cite{calcan2024Lyapunov}.
The formulation is given by
\begin{equation}
    \minimize_{x\in\R^n}\quad \reg_{\mcp}(x) \quad\subjectto\quad Ux=w,
    \label{problem:basis_pursuit}\tag{$\ensuremath{\mathcal{P}}$-BP}
\end{equation}
where $U\in\R^{m\times n}$ and $w\in\R^m$ are such that the linear system $Ux=w$ is consistent and underdetermined.
We have $\reg_{\mcp}(x)\coloneqq \sum_{i=1}^n\phi_{\mcp}(x_i)$, where $\phi_{\mcp}\colon\R\to\R$ denotes the MCP function with parameter $\beta>0$ and strength $\lambda>0$: 
\begin{equation*}
    \phi_{\mcp}(x)\coloneqq
    \begin{cases}
        \lambda |x| - \frac{x^2}{2\beta} \ifc |x| \leq \beta \lambda , \\
        \frac{\beta \lambda^2}{2} \ifc |x| > \beta \lambda.
    \end{cases}
\end{equation*} 
In this example we set $\beta=1$ and $\lambda=2$.
This choice of regularizer encourages sparse solutions to the linear system.
We consider basis pursuit not because a nonconvex regularizer would afford better objective value, but because this problem is easy to visualize and study in low dimensions to learn about behaviour.

The dual inclusion corresponding to \eqref{problem:basis_pursuit} is to find $\lambda\in\R^n$ such that 
\begin{equation}
    0\in\underbrace{-\partial \indicator_{\M}^*(-\lambda)}_{=D_F(\lambda)} + \underbrace{(\partial\reg_{\mcp})^{-1}(\lambda)}_{=D_G(\lambda)}.
    \label{problem:basis_pursuit_dual}\tag{$\ensuremath{\mathcal{D}}$-BP}
\end{equation}
Here $\M = \{ x\in \R^n \colon Ux=w\}$ and $\indicator^*_{\M}(y) = \sup_{x\in\M}\langle y,x\rangle$.
We apply DR to \eqref{problem:basis_pursuit_dual} using the resolvents 
\begin{multline*}
    J_{\gamma D_F}(y)=U^{\top}(UU^{\top})^{-1}(Uy+\gamma w)
    \quad\text{and}\\
    J_{\gamma D_G}(y)=(S_{\gamma(\partial \phi_\mcp)^{-1}}(y_1),\ldots,S_{\gamma(\partial \phi_\mcp)^{-1}}(y_n)).
\end{multline*}
An expression for selection $\smash{S_{\gamma(\partial \phi_\mcp)^{-1}}}$ is
\begin{equation}
    S_{\gamma(\partial \phi_\mcp)^{-1}}(x)=
    \begin{cases}
        x&\text{if } |x|\leq\lambda\\
        \frac{x-\sgn(x)\beta\gamma\lambda}{1-\beta\gamma}
        &\text{if }|x|\in({\lambda},\beta\gamma\lambda)\\0
        &\text{if } |x|\geq \beta\gamma\lambda.
    \end{cases}
    \label{mcp_dual_resolvent}
\end{equation}
We refer the reader to Appendix~\ref{app:mcp_scad_derivations} for a derivation of \eqref{mcp_dual_resolvent}.

The relationship between DR and its primal algorithm is visualized in Figure~\ref{fig:basis_pursuit_failed_convergence}.
The DR sequence $(y^k)$ for \eqref{problem:basis_pursuit_dual} is pictured transitioning from cyan to magenta over successive iterations.
The DR--induced primal sequence $(x^k)$ is represented using green squares, with the affine subspace $\M$ represented as a green line.
The boundary of squares $[-1,1]^2$ and $[-2,2]^2$ are shown in blue.
We observe DR and its induced method successfully converging in Figure~\ref{fig:basis_pursuit_success}, with success characterised by the sequence $(x^k)$ accumulating at a sparse point on the subspace.
The primal sequence fails to converge to a useful solution to \eqref{problem:basis_pursuit} in Figure~\ref{fig:basis_pursuit_fail_fixed_point} because of DR's apparent attraction to a ``bad fixed point''.
We see in Figure~\ref{fig:basis_pursuit_fail_periodicity} that DR failed to converge to a fixed point and instead settled into a periodic regime.
Repeated application of $T_{D_F,D_G}$ with $\gamma=1$, $U=[1,1]$ and $w=1$ reveals the 6--cycle: $(-1.5,0)$, $(-1.25,-0.25)$, $(-1.25,-0.75)$, $(-1.5,-1)$, $(-1.75,-0.75)$, $(-1.75,-0.25)$. We note that this is a novel instance of DR failing to converge for a $1$--weakly convex problem.
We apparently observe DR iterates \emph{chaotically} orbiting a useful fixed point in Figure~\ref{fig:basis_pursuit_fail_chaos}, resulting in the primal sequence's failure to settle at a solution.
The rightmost image on the second row shows an enlargement of the region we conjecture to be chaotic.
The Cinderella \cite{kortenkamp2000cinderella} script with data for each of these instances is located in the accompanying code.

\begin{figure}[htb!]
    \centering%
    \begin{subfigure}[c]{0.215\textwidth}
\centering
\begin{tikzpicture}[scale=0.36]
\begin{axis}[
  axis lines=middle,
  axis line style = {<->, line width=1.2pt},
  xmin=-2.7, xmax=2.7,
  ymin=-2.7, ymax=2.7,
  xlabel={}, ylabel={},
  xticklabels={}, yticklabels={},
  width=9cm, height=9cm,
  axis equal image,
]

\addplot[blue,line width=1.6pt] coordinates {
  (-2,-2) (2,-2) (2,2) (-2,2) (-2,-2)
};

\addplot[blue, line width=1.6pt] coordinates {
  (-1,-1) (1,-1) (1,1) (-1,1) (-1,-1)
};

\addplot[green!70!black, line width=2.2pt, domain=-2.6:2.6,<->] {-1 + 0.5*x};

\addplot[
  scatter,
  scatter src=explicit,
  scatter/use mapped color={draw=black, fill=mapped color},
  only marks,
  mark=square*,
  mark size=4pt,
  colormap name=teal2green,
]
table[
  col sep=comma,
  x=x,
  y=y,
  meta expr=\coordindex
]{SuccessfulConvergence_ADMM_points.csv};

\addplot[
  scatter,
  scatter src=explicit,
  scatter/use mapped color={draw=black, fill=mapped color},
  only marks,
  mark=*,
  mark size=4pt,
  colormap name=cyan2magenta,
]
table[
  col sep=comma,
  x=x,
  y=y,
  meta expr=\coordindex
]{SuccessfulConvergence_DR_points.csv};

\addplot[
  scatter,
  scatter src=explicit,
  no marks,
  line width = 2pt,
  draw=cyan,
]
table[
  col sep=comma,
  x=x,
  y=y,
  meta expr=\coordindex
]{SuccessfulConvergence_DR_points.csv};

\end{axis}
\end{tikzpicture}
\caption{}
\label{fig:basis_pursuit_success}
\end{subfigure}
\hfill
\begin{subfigure}[c]{0.215\textwidth}
\centering
\begin{tikzpicture}[scale=0.36]
\begin{axis}[
  axis lines=middle,
  axis line style = {<->, line width=1.2pt},
  xmin=-2.7, xmax=2.7,
  ymin=-2.7, ymax=2.7,
  xlabel={}, ylabel={},
  xticklabels={}, yticklabels={},
  width=9cm, height=9cm,
  axis equal image,
]

\addplot[blue, line width=1.6pt] coordinates {
  (-2,-2) (2,-2) (2,2) (-2,2) (-2,-2)
};

\addplot[blue, line width=1.6pt] coordinates {
  (-1,-1) (1,-1) (1,1) (-1,1) (-1,-1)
};

\addplot[green!70!black, line width=2.2pt, domain=-2.6:2.6,<->] {-1 + 0.5*x};

\addplot[
  scatter,
  scatter src=explicit,
  scatter/use mapped color={draw=black, fill=mapped color},
  only marks,
  mark=*,
  mark size=4pt,
  colormap name = cyan2magenta,
]
table[
  col sep=comma,
  x=x,
  y=y,
  meta expr=\coordindex
]{bad_fixed_DR_points.csv};

\addplot[
  scatter,
  scatter src=explicit,
  scatter/use mapped color={draw=black, fill=mapped color},
  only marks,
  mark=square*,
  mark size=4pt,
  colormap name = teal2green,
]
table[
  col sep=comma,
  x=x,
  y=y,
  meta expr=\coordindex
]{bad_fixed_ADMM_points.csv};

\addplot[
  line width = 2pt,
  cyan,
] coordinates {
  (1.98,2.38)
  (2.12,2.05)
};
\end{axis}
\end{tikzpicture}
\caption{}
\label{fig:basis_pursuit_fail_fixed_point}
\end{subfigure}
\hfill
\begin{subfigure}[c]{0.215\textwidth}
\centering
\begin{tikzpicture}[scale=0.36]
\begin{axis}[
  axis lines=middle,
  axis line style = {<->, line width=1.2pt},
  xmin=-2.7, xmax=2.7,
  ymin=-2.7, ymax=2.7,
  xlabel={}, ylabel={},
  xticklabels={}, yticklabels={},
  width=9cm, height=9cm,
  axis equal image,
]

\addplot[blue,line width=1.6pt] coordinates {
  (-2,-2) (2,-2) (2,2) (-2,2) (-2,-2)
};

\addplot[blue, line width=1.6pt] coordinates {
  (-1,-1) (1,-1) (1,1) (-1,1) (-1,-1)
};

\addplot[green!70!black, line width=2.2pt, domain=-1.5:2.6,<->] {1 - x};

\addplot[
  scatter,
  scatter src=explicit,
  scatter/use mapped color={draw=black, fill=mapped color},
  only marks,
  mark=square*,
  mark size=4pt,
  colormap name=teal2green,
]
table[
  col sep=comma,
  x=x,
  y=y,
  meta expr=\coordindex
]{periodic_ADMM_points.csv};

\addplot[
  scatter,
  scatter src=explicit,
  scatter/use mapped color={draw=black, fill=mapped color},
  only marks,
  mark=*,
  mark size=4pt,
  colormap name=cyan2magenta,
]
table[
  col sep=comma,
  x=x,
  y=y,
  meta expr=\coordindex
]{periodic_DR_points.csv};

\addplot[
  scatter,
  scatter src=explicit,
  no marks,
  line width = 2pt,
  draw=cyan,
]
table[
  col sep=comma,
  x=x,
  y=y,
  meta expr=\coordindex
]{periodic_DR_points.csv};

\addplot[
  line width = 2pt,
  cyan,
] coordinates {
  (-1.75,-0.25)
  (-1.5,0)
};
\end{axis}
\end{tikzpicture}
\caption{}
\label{fig:basis_pursuit_fail_periodicity}
\end{subfigure}
\hfill
\begin{subfigure}[c]{0.215\textwidth}
\centering
\begin{tikzpicture}[scale=0.36]
\begin{axis}[
  axis lines=middle,
  axis line style = {<->, line width=1.2pt},
  xmin=-2.7, xmax=2.7,
  ymin=-2.7, ymax=2.7,
  xlabel={}, ylabel={},
  xticklabels={}, yticklabels={},
  width=9cm, height=9cm,
  axis equal image,
]

\addplot[blue, line width=1.6pt] coordinates {
  (-2,-2) (2,-2) (2,2) (-2,2) (-2,-2)
};

\addplot[blue, line width=1.6pt] coordinates {
  (-1,-1) (1,-1) (1,1) (-1,1) (-1,-1)
};

\addplot[green!70!black, line width=2.2pt, domain=-1.25:2.6,<->] {1.14 - 0.951954*x};

\addplot[
  scatter,
  scatter src=explicit,
  scatter/use mapped color={draw=black, fill=mapped color},
  only marks,
  mark=*,
  mark size=4pt,
  colormap name = magenta2cyan,
]
table[
  col sep=comma,
  x=x,
  y=y,
  meta expr=\coordindex
]{chaos_DR_points.csv};

\addplot[
  scatter,
  scatter src=explicit,
  scatter/use mapped color={draw=black, fill=mapped color},
  only marks,
  mark=square*,
  mark size=4pt,
  colormap name = teal2green,
]
table[
  col sep=comma,
  x=x,
  y=y,
  meta expr=\coordindex
]{chaos_ADMM_points.csv};

\addplot[
  scatter,
  scatter src=explicit,
  no marks,
  line width=2pt,
  draw=cyan,
]
table[
  col sep=comma,
  x=x,
  y=y,
  meta expr=\coordindex
]{chaos_DR_points.csv};
\end{axis}
\end{tikzpicture}
\caption{}
\label{fig:basis_pursuit_fail_chaos}
\end{subfigure}
\hfill
\begin{subfigure}[c]{0.1\textwidth}
\centering
\begin{tikzpicture}[scale=0.38]
\begin{axis}[
  axis lines=none,
  xmin=-2.5, xmax=-0.5,
  ymin=-1.5, ymax=0.3,
  xticklabels={}, yticklabels={},
  width=9cm, height=9cm, 
  axis equal image,
  clip=true,
]

\addplot[
  scatter,
  scatter src=explicit,
  scatter/use mapped color={draw=black, fill=mapped color},
  only marks,
  mark=*,
  mark size=4pt,
  colormap name = magenta2cyan,
]
table[col sep=comma, x=x, y=y, meta expr=\coordindex]
{chaos_DR_points.csv};

\addplot[
  no marks,
  line width=2pt,
  draw=cyan,
]
table[col sep=comma, x=x, y=y]
{chaos_DR_points.csv};

\end{axis}
\end{tikzpicture}
\caption*{}
\end{subfigure}
    \caption{DR and induced primal sequences generated for instances of \eqref{problem:basis_pursuit_dual}.}%
    \label{fig:basis_pursuit_failed_convergence}%
\end{figure}

\section{A primal--dual scheme derived from Gabay duality}\label{sec:prim_dual_scheme}

To address DR's occasional failure to converge for non maximal monotone problems, we propose a novel splitting scheme based on Gabay duality. We analyse the method for a class of operator slightly more general than maximal monotonicity:
\begin{definition}[Operators with nonexpansive selectants]
    Let $D:\H\rightrightarrows\H$.
    We say that $D\in\NR$ if there exists some $\gamma>0$ such that the selectant $S_{\gamma D}$ is nonexpansive. 
\end{definition}
Class $\NR$ contains maximal monotone operators.
Indeed, the resolvents of maximal monotone operators are nonexpansive, so one may simply take $S_{\gamma D}=J_{\gamma D}$.
There also exist some non maximal monotone operators that possess nonexpansive selectants, such as the inverse Clarke subdifferentials of the MCP and SCAD.
The fact that MCP and SCAD belong to this class (for certain parameter values) is shown in Appendix~\ref{app:mcp_scad_derivations}.

If an operator's resolvent is nonexpansive but not firmly, its reflection will fail to be nonexpansive.
We suspect DR's observed difficulties with \eqref{problem:basis_pursuit_dual} are a result of $R_{\gamma (\partial \reg)^{-1}}$ not being nonexpansive.
The method we propose is instead based around \emph{over--relaxed resolvents}.
\begin{definition}[Over--relaxed resolvent]\label{def:over_relaxed_resolvent}
    Let $D\colon\H\rightrightarrows\H$ and $\alpha \in [0,1]$.
    The over--relaxed resolvent of $D$ is defined as $R_{\gamma D}^{\alpha} \coloneqq (1+\alpha) J_{\gamma D} - \alpha I$.
\end{definition}
Over--relaxed resolvents are related to reflections in the following way.
\begin{lemma}\label{lem:over_relaxations_H_theta}
    Let $D\colon\H\rightrightarrows\H$.
    For any $\alpha\in[0,1]$, there exists a possibly set-valued mapping $D^{\alpha}$ and associated resolvent $J_{\gamma D^{\alpha}}$ such that $R^{\alpha}_{\gamma D} = R_{\gamma D^{\alpha}}$. 
\end{lemma}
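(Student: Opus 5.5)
We construct $D^{\alpha}$ explicitly by demanding that its resolvent be the relaxed average of $J_{\gamma D}$ and the identity. The identity to aim for is $R_{\gamma D^\alpha} = 2J_{\gamma D^\alpha}-I = (1+\alpha)J_{\gamma D}-\alpha I$. This forces the target resolvent
$$
J_{\gamma D^{\alpha}} = \tfrac{1+\alpha}{2}J_{\gamma D} + \tfrac{1-\alpha}{2} I,
$$
understood pointwise as the set $\{\tfrac{1+\alpha}{2}p+\tfrac{1-\alpha}{2}x : p\in J_{\gamma D}(x)\}$, which is empty exactly when $J_{\gamma D}(x)$ is. Since every set-valued map $K$ is the resolvent of some operator, namely $K^{-1}-I$, we define
$$
\gamma D^{\alpha} \coloneqq \Big(\tfrac{1+\alpha}{2}J_{\gamma D}+\tfrac{1-\alpha}{2}I\Big)^{-1} - I,
\qquad D^\alpha \coloneqq \gamma^{-1}\Big[\Big(\tfrac{1+\alpha}{2}J_{\gamma D}+\tfrac{1-\alpha}{2}I\Big)^{-1}-I\Big].
$$

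The first key step is to check that $(I+\gamma D^\alpha)^{-1}$ really recovers the intended map $K\coloneqq \tfrac{1+\alpha}{2}J_{\gamma D}+\tfrac{1-\alpha}{2}I$, with no loss in the set-valued setting. By construction $I+\gamma D^\alpha = K^{-1}+I-I$. The only subtlety is that set addition $K^{-1}(y)+\{-y\}+\{y\}$ returns $K^{-1}(y)$, including when $K^{-1}(y)=\emptyset$, because adding a singleton is invertible. Inverting then gives $J_{\gamma D^\alpha}=(K^{-1})^{-1}=K$, since graph inversion is an involution.

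The second step is to compute $R_{\gamma D^\alpha}(x) = 2K(x)-x = \{(1+\alpha)p+(1-\alpha)x - x : p\in J_{\gamma D}(x)\} = \{(1+\alpha)p-\alpha x\}$. This equals $R^\alpha_{\gamma D}(x)$ as a set, again with empty images matching. For convenience one can also give the graph description $\gr{\gamma D^\alpha}=\{(\tfrac{1+\alpha}{2}p+\tfrac{1-\alpha}{2}x,\ \tfrac{1+\alpha}{2}(x-p)) : p\in J_{\gamma D}(x)\}$. This shows, for instance, that $\alpha=1$ gives $D^1=D$ and $\alpha=0$ gives the averaged case.

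The main obstacle is purely bookkeeping. One must interpret scalar multiples and sums of set-valued maps carefully, so that identities like $2K-I$ are taken elementwise with the same selection $p$, rather than as a Minkowski sum $K(x)+K(x)$, which would be larger. I would fix this convention explicitly at the start, consistent with how the paper defines $R^\alpha_{\gamma D}$ and $R_{\gamma D}$.
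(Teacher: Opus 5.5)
Your proposal is correct and follows essentially the same route as the paper: define $D^{\alpha}=\gamma^{-1}\big(\tfrac12(1+\alpha)J_{\gamma D}+\tfrac12(1-\alpha)I\big)^{-1}-\gamma^{-1}I$, note that $I+\gamma D^{\alpha}$ is the inverse of the averaged map so that $J_{\gamma D^{\alpha}}$ equals it, and rearrange $2J_{\gamma D^{\alpha}}-I$ into $(1+\alpha)J_{\gamma D}-\alpha I$. Your extra care with set-valued conventions (empty images, elementwise scalar multiples) and the graph description, including the check $D^{1}=D$, is correct and fills in bookkeeping that the paper leaves implicit.
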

\begin{proof}
    We set $D^{\alpha}=\gamma^{-1}(\frac12(1+\alpha)J_{\gamma D}+\frac12 (1-\alpha)I)^{-1}-\gamma^{-1}I$.
    It follows that $I + \gamma D^{\alpha} = (\frac12(1+\alpha)J_{\gamma D}+\frac12 (1-\alpha)I)^{-1}$, wherefore the inverse mappings are equal:
    $\frac12(1+\alpha)J_{\gamma D}+\frac12 (1-\alpha)I=(I+\gamma D^{\alpha})^{-1}=J_{\gamma D^{\alpha}}$.
    Rearranging yields $(1+\alpha)J_{\gamma D} - \alpha I=2J_{\gamma D^{\alpha}}-I$, as required.
\end{proof}

\begin{remark}\label{rem:homotopy_operator_exists}
    If we consider the dual operator $D_H=(-M\circ H^{-1}\circ -M^*)-d$ with $M=-I$, Lemma~\ref{lem:over_relaxations_H_theta} grants the existence of $H^{\alpha}$ such that $R^{\alpha}_{\gamma (H^{-1}-d)}=R_{\gamma ((H^{\alpha})^{-1}- d)}$.
    The operator $D^{\alpha}$ may enjoy a natural interpretation as a \emph{homotopy} related to $D$.
    For instance, if $D=(\partial \phi_{\mcp})^{-1}$ then $D^{\alpha}=(\partial\phi^{\alpha}_{\mcp})^{-1}$. Similarly, $D^{\alpha}=(\partial\phi_{\scad}^{\alpha})^{-1}$ corresponds to $D=(\partial \phi_{\scad})^{-1}$. The mappings $\phi_{\mcp}^{\alpha}$ and $\phi_{\scad}^{\alpha}$ are visualised in Figure~\ref{fig:homotopies}.
    \begin{figure}[tbh]
        \centering%
        \begin{subfigure}{0.14\textwidth}
    \centering
    \begin{tikzpicture}[domain=-3:3]
        \node[anchor=south,inner sep=0] (image) at (0,0){\includegraphics[width=0.4\columnwidth]{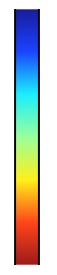}};
        \node[font=\small] at (-0.5,4.2){$1$};
        \node[font=\small] at (-0.5,2.2){$\alpha$};
        \node[font=\small] at (-0.5,0.2){$0$};
    \end{tikzpicture}
\end{subfigure}
\hfill
\begin{subfigure}{0.4\textwidth}
    \centering
    \begin{tikzpicture}[domain=-3:3]
        \draw[line width = 0.6pt,->] (0,-0.1) -- (0,4.8) node[pos = 1,right] {};
        \draw[line width = 0.6pt,->] (-2.3,0) -- (2.3,0) node[pos=0,above right] {} node[pos=1,above left] {};
        \node[anchor=south] (image) at (0,0){\includegraphics[width=0.7\columnwidth]{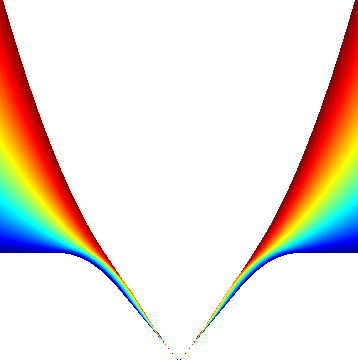}};
        \node[anchor=south west] (capt) at (0.2,2.2) {$\phi_{\scad}^{\alpha}$};
    \end{tikzpicture}
\end{subfigure}
\hfill
\begin{subfigure}{0.4\textwidth}
    \centering
    \begin{tikzpicture}[domain=-3:3]
        \draw[line width = 0.6pt,->] (0,-0.1) -- (0,4.8) node[pos = 1, right] {};
        \draw[line width = 0.6pt,->] (-2.3,0) -- (2.3,0) node[pos=0,above right] {} node[pos=1,above left] {};
        \node[anchor=south] (image) at (0,0){\includegraphics[width=0.7\columnwidth]{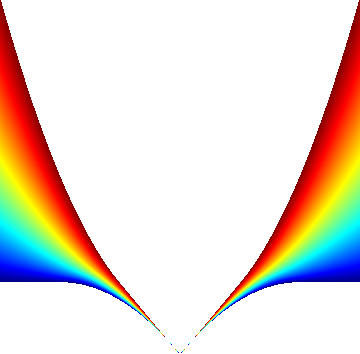}};
        \node[anchor=south west] (capt) at (0.2,2.2) {$\phi_{\mcp}^{\alpha}$};
    \end{tikzpicture}
\end{subfigure}
\caption{The graphs of homotopies $\phi_{\scad}^{\alpha}$ and $\phi_{\mcp}^{\alpha}$ induced by over--relaxed resolvents.}%
\label{fig:homotopies}%
\end{figure}
\end{remark}

\begin{corollary}
    Let $D\in\NR$.
    Then 
    \begin{enumerate}[label=(\roman*)]
        \item\label{cor:continuous_in_param_reflections} the mapping $[0,1]\to \H$, $\alpha\mapsto R^{\alpha}_D(y)$ is continuous for any arbitrary $y\in\H$;
        \item\label{lem:reflections_Lipschitz} there exists $D^{\alpha}$ such that $R_{\gamma D^{\alpha}}$ is $(1+2\alpha)$--Lipschitz.
    \end{enumerate}
\end{corollary}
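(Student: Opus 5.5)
The plan is to interpret $J_{\gamma D}$ as the nonexpansive selectant $S_{\gamma D}$ guaranteed by $D\in\NR$, since this is what makes $R^\alpha_{\gamma D}$ a single-valued map $\H\to\H$. Accordingly, I set $R^\alpha_{\gamma D}=(1+\alpha)S_{\gamma D}-\alpha I$ and prove both items directly from this affine-in-$\alpha$ formula together with Lemma~\ref{lem:over_relaxations_H_theta}. Item~\ref{cor:continuous_in_param_reflections} holds for the fixed $\gamma$ witnessing $D\in\NR$.

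For item~\ref{cor:continuous_in_param_reflections}, fix $y\in\H$. The map $\alpha\mapsto(1+\alpha)S_{\gamma D}(y)-\alpha y$ is affine in $\alpha$ with fixed vectors $S_{\gamma D}(y)$ and $y$. Hence
$$\|R^\alpha_{\gamma D}(y)-R^{\alpha'}_{\gamma D}(y)\|=|\alpha-\alpha'|\,\|S_{\gamma D}(y)-y\|,$$
which gives Lipschitz continuity, and in particular continuity, on $[0,1]$. The only point to note is that $S_{\gamma D}(y)$ is a single point independent of $\alpha$; this is where determinism of the selection is used.

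For item~\ref{lem:reflections_Lipschitz}, I invoke Lemma~\ref{lem:over_relaxations_H_theta}, applied with $S_{\gamma D}$ in place of $J_{\gamma D}$. The lemma's construction $D^\alpha=\gamma^{-1}(\frac12(1+\alpha)S_{\gamma D}+\frac12(1-\alpha)I)^{-1}-\gamma^{-1}I$ goes through verbatim for any map. It yields $D^\alpha$ with $R_{\gamma D^\alpha}=R^\alpha_{\gamma D}=(1+\alpha)S_{\gamma D}-\alpha I$. By the triangle inequality and nonexpansiveness of $S_{\gamma D}$, for all $x,y$,
$$\|R_{\gamma D^\alpha}x-R_{\gamma D^\alpha}y\|\le(1+\alpha)\|S_{\gamma D}x-S_{\gamma D}y\|+\alpha\|x-y\|\le(1+2\alpha)\|x-y\|.$$

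The main obstacle is not computational. It is justifying that $J_{\gamma D^\alpha}$ coincides with the single-valued map $\frac12(1+\alpha)S_{\gamma D}+\frac12(1-\alpha)I$. The inverse of an inverse recovers the original map as a set-valued object, so $(I+\gamma D^\alpha)^{-1}$ is exactly that single-valued map. Consequently $R_{\gamma D^\alpha}$ is single-valued with full domain, and the Lipschitz estimate is meaningful everywhere.
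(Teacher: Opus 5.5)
Your proof is correct and follows the paper's argument: item (i) comes from the affine dependence of $R^{\alpha}_{\gamma D}(y)$ on $\alpha$, and item (ii) takes $D^{\alpha}$ from Lemma~\ref{lem:over_relaxations_H_theta} and then uses the triangle inequality and nonexpansiveness to get the constant $(1+\alpha)+\alpha=1+2\alpha$. Your explicit replacement of $J_{\gamma D}$ by the nonexpansive selectant $S_{\gamma D}$ is a clarification, not a different route; the paper makes this substitution only tacitly, when it invokes $D\in\NR$ for the final inequality.
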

\begin{proof}
    Item \ref{cor:continuous_in_param_reflections} follows immediately from fact that over--relaxed resolvents are affine in $\alpha$.
    Considering \ref{lem:reflections_Lipschitz}, the existence of $D^{\alpha}$ such that $R_{\gamma D^{\alpha}}=R^{\alpha}_{\gamma D}$ is given by Lemma~\ref{lem:over_relaxations_H_theta}.
    Then, for all $x,y\in\H$,
    \begin{multline*}
        \left\| R_{\gamma D^{\alpha}}(x) - R_{\gamma D^{\alpha}}(y) \right\|
        =
        \left\| R_{\gamma D}^{\alpha}x - R_{\gamma D}^{\alpha}y \right\|
        =
        \left\| (1+\alpha)(J_{\gamma D}x-J_{\gamma D}y)+\alpha(y-x) \right\| \\
        \leq
        (1+\alpha) \left\| J_{\gamma D}x-J_{\gamma D}y \right\| + \alpha\|x-y\|
        \leq
        (1+2\alpha) \|x-y\|,
    \end{multline*}
    as claimed, where the last inequality follows from $D\in\NR$.
\end{proof}

\begin{proposition}\label{prop:descends_to_quotient}
    Let $M:\H \rightarrow \H_1$ and $J:\H_1 \rightarrow \H_1$ be a set valued mapping. Let $D_H=(-M\circ H^{-1}\circ -M^*)(\cdot)- d$. The following are equivalent.
    \begin{enumerate}[label=(\roman*)]
        \item\label{item:H_eps_existence} There exists a set-valued mapping $H:X \rightarrow X$ such that $(H)^{-1} :{\mathrm {Image}}M^* \rightarrow \H$ satisfies $J = (I+\gamma D_{H})^{-1}$;        \item\label{item:descends_to_quotient} $(J)^{-1}=\tilde{J}+I$ for some $\tilde{J}: \H_1 \rightrightarrows {\mathrm{Image}}M$ such that $\tilde{J}$ descends to the quotient $\H_1/\ker M^*$ (i.e. whenever $u_1 - u_2 \in \ker M^*$, $\tilde{J}(u_1) = \tilde{J}(u_2)$).
    \end{enumerate}
\end{proposition}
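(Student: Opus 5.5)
Both directions come from unwinding the definitions of resolvent and dual operator. We have $J^{-1} = I+\gamma D_H$ exactly when $J = (I+\gamma D_H)^{-1}$. So the statement reduces to characterising which set-valued maps $\tilde J:\H_1\rightrightarrows\H_1$ can be written as $\tilde J = \gamma D_H = \gamma\bigl(-M H^{-1}(-M^*\cdot) - d\bigr)$ for some $H$. I treat $d$ as a fixed translation absorbed into $\tilde J$, i.e. $\tilde J(u) = -\gamma M H^{-1}(-M^*u) - \gamma d$. Then "$\tilde J$ maps into $\mathrm{Image}\,M$" is read modulo the fixed shift $\gamma d$, or with $d\in\mathrm{Image}\,M$; I will state this convention explicitly at the start. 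Throughout, inverses are graph inverses, so no single-valuedness or nonemptiness is assumed. This is the same spirit as the proof of Theorem~\ref{thm:Moreaus_identity}.

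\textbf{(i)$\Rightarrow$(ii).} Put $\tilde J \coloneqq \gamma D_H$, so that $J^{-1} = I+\tilde J$. Every value of $\tilde J(u)$ has the form $-\gamma Mq - \gamma d$ with $q\in H^{-1}(-M^*u)$, so it lies in $\mathrm{Image}\,M$ up to the shift. For the quotient property, take $u_1-u_2\in\ker M^*$. Then $-M^*u_1 = -M^*u_2$, hence $H^{-1}(-M^*u_1) = H^{-1}(-M^*u_2)$ and $\tilde J(u_1)=\tilde J(u_2)$. This direction is immediate.

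\textbf{(ii)$\Rightarrow$(i).} This direction needs a construction. I will define $H^{-1}$ on $\mathrm{Image}\,M^*$ and then take its graph inverse as $H$. For $v\in\mathrm{Image}\,M^*$, pick any $u$ with $-M^*u = v$, and set
\[
H^{-1}(v) \coloneqq \Bigl\{\, q : -\gamma Mq - \gamma d \in \tilde J(u) \,\Bigr\},
\]
that is, $H^{-1}(v)$ is the preimage under $q\mapsto -\gamma(Mq+d)$ of the set $\tilde J(u)$.

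\emph{Well-definedness.} Two choices $u_1,u_2$ with the same $M^*$-image differ by an element of $\ker M^*$. The descent hypothesis gives $\tilde J(u_1)=\tilde J(u_2)$, so $H^{-1}(v)$ does not depend on the choice of $u$.

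\emph{Recovering $\tilde J$.} Directly from the definition, $\gamma D_H(u) = \{-\gamma Mq-\gamma d : q\in H^{-1}(-M^*u)\} = \tilde J(u)\cap(\mathrm{Image}\,M - \gamma d)$. The hypothesis that $\tilde J$ takes values in $\mathrm{Image}\,M$ (shifted) makes this intersection all of $\tilde J(u)$. Hence $I+\gamma D_H = J^{-1}$, and inverting graphs gives $J=(I+\gamma D_H)^{-1}$.

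The main obstacle is the bookkeeping in this direction rather than any hard estimate. Two points need care. First, the surjectivity issue: every element of $\tilde J(u)$ must be attained as $-\gamma Mq-\gamma d$ for some $q$, which is exactly where the range condition is used. Second, the statement's loosely specified spaces (e.g. $X$, and $M:\H\to\H_1$ versus the earlier convention) must be fixed consistently so that $M^*$ and $M$ act between the right spaces. I would fix these conventions first and then check the two set equalities elementwise, exactly as in the $(\subseteq)/(\supseteq)$ structure of the proof of Theorem~\ref{thm:Moreaus_identity}.
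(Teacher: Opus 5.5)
Your proposal is correct and is essentially the paper's proof. For (i)$\Rightarrow$(ii) you take $\tilde J=\gamma D_H$ and observe that $H^{-1}$ only sees $M^*u$. For (ii)$\Rightarrow$(i) you define $H^{-1}(-M^*u)$ as the preimage $(-M)^{-1}\bigl(\gamma^{-1}\tilde J(u)+d\bigr)$, which is well defined by the descent property and recovers $\gamma D_H=\tilde J$ through the range condition.

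Your explicit convention on the shift (reading the range as $\mathrm{Image}\,M-\gamma d$, or assuming $d\in\mathrm{Image}\,M$) is an assumption the paper's proof uses silently in both directions. It is genuinely needed, because $\tilde J=J^{-1}-I$ is uniquely determined and $\gamma D_H$ takes values in $\mathrm{Image}\,M-\gamma d$, so flagging it is right.
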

\begin{proof}
    (\ref{item:H_eps_existence} $\implies$ \ref{item:descends_to_quotient}).
    Suppose \ref{item:H_eps_existence} holds.
    Then $J$ and $(I+\gamma D_{H})^{-1}: \H_1 \rightrightarrows \H_1$ are equal.
    Hence their `inverse' maps $\left(J\right)^{-1}$ and $I+\gamma D_{H}: \H_1 \rightrightarrows \H_1$ must also be equal, i.e., $\left(J\right)^{-1}=I+\gamma D_H$.
    Thus we obtain  
    \begin{equation}\label{eqn:reformulate_H_eps_problem}
    \gamma (-M\circ (H)^{-1}\circ -M^*)(\cdot)-\gamma d= \left( J \right)^{-1} - I.
    \end{equation}
    It follows that $(J)^{-1}= \tilde{J} + I$ for some $\tilde{J}: \H_1 \rightrightarrows {\mathrm{Image}}M$.
    Let $-M^*u_1 = -M^*u_2$.
    As $(H)^{-1}$ is well-defined by assumption, we have that 
    \begin{alignat*}{2}
    (-M^*u_1 = -M^*u_2) \implies&& U_1\coloneqq (H)^{-1}(-M^*u_1) = (H)^{-1}(-M^*u_2) =:U_2,
    \end{alignat*}
    where \eqref{eqn:reformulate_H_eps_problem} makes clear that
    \begin{align*}
        (H)^{-1}(-M^*u_1) &= U_1 \quad \text{satisfies}\quad -\gamma M(U_1)-\gamma d = (J)^{-1}(u_1)-u_1\\
        \text{and}\quad (H)^{-1}(-M^*u_2) &= U_2 \quad \text{satisfies}\quad -\gamma M(U_2) -\gamma d= (J)^{-1}(u_2)-u_2.
    \end{align*}
    Since $U_1=U_2$, we must have $-\gamma M(U_1)-\gamma d=-\gamma M(U_2)-\gamma d$.
    It follows that
    \begin{multline*}
        (J)^{-1}(u_1)-u_1 = (J)^{-1}(u_2)-u_2 \\
    \implies \tilde{J}(u_1) + u_1 - u_1 = \tilde{J}(u_2) + u_2-u_2
    \implies \tilde{J}(u_1) = \tilde{J}(u_2),
    \end{multline*}
    which shows \ref{item:descends_to_quotient}.
    
    (\ref{item:descends_to_quotient} $\implies$ \ref{item:H_eps_existence}).
    Suppose \ref{item:descends_to_quotient}.
    We first show that there exists a well-defined mapping $(H)^{-1}$ such that
    $$
    (H)^{-1} \circ-M^* = (-M)^{-1}( \gamma^{-1}(\left( J \right)^{-1}-I))+d.
    $$
    Let $-M^*u_1 = -M^*u_2$.
    Since $\tilde{J}$ descends to the quotient, it holds that $\tilde{J}(u_1) = \tilde{J}(u_2)$, whereupon it is clear that $(H)^{-1} (-M^*u_1) = (H)^{-1} (-M^*u_2)$.
    This shows $(H)^{-1}$ is well-defined.
    Now for any $u$ we have that
    \begin{multline*}
    (H)^{-1} (-M^* u) = (-M)^{-1} (\gamma^{-1}((J)^{-1}u-u)+d\\
    \implies -M((H)^{-1} (-M^*u)) = \gamma^{-1}(J)^{-1}(u) - \gamma^{-1} u +d\\
    \implies ((-\gamma M\circ(H)^{-1}\circ -M^*) + I)(u)-\gamma d = (J)^{-1}(u).
    \end{multline*}
    As the two set-valued mappings are equal, so also must be their associated inverse mappings.
    This shows \ref{item:H_eps_existence}.
\end{proof}
We note that condition \ref{item:descends_to_quotient} holds whenever $M^*$ is injective.

\subsection{Homotopically--stabilised DR}\label{sec:HOST}

We propose the \emph{homotopically stabilised Douglas--Rachford method} (HOST).
The method uses the following operator. 
\begin{definition}[HOST operator]\label{def:HOST_operator}
    Let $(\varphi^k)$ and $(\theta^k)$ be sequences in $[0,1]$ and $\A,\B:\H\rightrightarrows\H$ have computable resolvents.
    The HOST operator is defined as
    $$
    T^{\varphi^k,\theta^k}_{\A,\B}\coloneqq \frac12\left(I+R^{\varphi^k}_{\gamma\A}\circ R^{\theta^k}_{\gamma\B}\right).
    $$
\end{definition}
It follows from Lemma~\ref{lem:over_relaxations_H_theta} that recursively applying the HOST operator is equivalent to DR with operator $T_{\A^{\varphi^k},\B^{\theta^k}}$.
If $\A$ and $\B$ are maximal monotone, $(\varphi^k,\theta^k)\to (1,1)$ and $\Fix T_{\A,\B}\neq\emptyset$, we have by Theorem~\ref{thm:DR} that any sequence defined by $y^{k+1}\in T_{\A,\B}^{\varphi^k,\theta^k}(y^k)$ converges to $u\in\Fix T^{1,1}_{\A,\B}$ such that $J_{\gamma \B}u\in\zeroset(\A+\B)$.
We use the HOST operator to propose a convergent scheme for \eqref{general_inclusion_problem} under the following assumption.
\begin{assumption}
    Operators $\A,\B\in\NR$ and there exists some $\lambda^*\in\zeroset(\A+\B)$.
\end{assumption}
Theorem~\ref{thm:GMI_duality} makes possible many algorithmic schemes for \eqref{primal_inclusion_problem} built upon DR--type methods.
We have already mentioned two such strategies that are typically deployed with the aim of reducing the number of computed iterates \cite{lindstrom2022computable,poon2019trajectory}.
In this section, we will use Gabay's duality to motivate the design of an algorithm with improved stability for nonmonotone problems.

Before we introduce the prototypical scheme, some remarks are in order.
The method is designed to always converge with one of two outcomes:
\begin{enumerate}[label=\Roman{*}.,ref=\Roman{*}]
    \item The solution obtained may only be related to the original problem.
    In this case, the practitioner receives an accompanying certificate ($\tau = 0$) together with some information about the problem that was actually solved.
    Said information comes in the form of limits for the parameter sequences used $(\varphi^k,\theta^k)$---for many problems, these values admit clear interpretations and the solutions obtained are still of practical value for the problem we set out to solve (c.f.~the homotopic regularizers in section~\ref{subsec:rlad}). 
    \item\label{item:the_good_situation} The solution obtained solves \eqref{primal_inclusion_problem}, in which case $\tau=1$ and $(\varphi^k,\theta^k) \to (1,1)$.
    This is the case that requires the most demanding analysis.
\end{enumerate}
In practice, we observed that for each of our experiments situation \ref{item:the_good_situation} occurred.
This is as we desired, and also unsurprising when one understands the motivations for our algorithmic scheme.
In essence, we do as follows.
First, we choose our sequences in such a way that we begin by applying $T^{0,0}_{\A,\B} = \frac12 I+\frac12 J_{\gamma \A}\circ J_{\gamma \B}$.
    This mapping produces a globally convergent sequence (cf. Theorem~\ref{thm:DR}) under the assumption $\A,\B\in\NR$.
    During this ``warm start'' period, we hope to draw the sequence $(y^k)$ away from any regions of pathological behaviour. 
Second, if our iterates $y^k$ appear to be converging with a rate no worse than a sublinear tolerance we specify, then we update our parameters so that the HOST operator $T^{\varphi^k,\theta^k}_{\A,\B}$ converges pointwise to the DR operator $T_{\A,\B}$.
By asking for a rate that is no worse than sublinear, we aim to give the iterates $y^k$ ample opportunity to follow the moving solution to the evolving problem.
Since DR is well--known for exhibiting linear convergence, it is reasonable to expect that this strategy would work well, as it did in our experiments.

While we are able to prove HOST's convergence to a solution of \eqref{primal_inclusion_problem}, in the \emph{optimisation setting} we do not know sufficient conditions to guarantee that the global minimizer to the evolving problem moves continuously along a path.
Such guarantees, even for one specific chosen regularizer, would necessitate a separate investigation.
Consider, for example, \cite{hoheisel2023lasso}, wherein the authors provided sufficient conditions for the global solution to classical LASSO ($\min_x \; \|Ax-b\|^2 + \gamma \| x\|_1$) to be continuous in $\gamma$.
This analysis was sufficiently involved as to constitute an independent study.
Notwithstanding, the high quality solutions we obtained are evidence that the global minimizer may move continuously for some problems.

The proposed HOST scheme is detailed in Algorithm~\ref{alg:homotopy}.
As is typical, the algorithm's safety termination condition depends on a user-selected maximum number of iterations.
The method may otherwise terminate if several criteria are met.
The first is based on the closeness of the homotopy parameters to 1, indicating that the evolving problem is sufficiently close to the target problem. 
This is combined with the standard successive iterate difference criterion $\|y^{k+1}-y^k\|\leq\tol_y$.

\begin{algorithm}[htb!]
    \caption{HOST: Homotopically stabilized Douglas--Rachford splitting}
    \label{alg:homotopy}
    \begin{algorithmic}[1]
        \State Choose $\beta\geq0$ and $\bar{p} >0$ and build
        sequence $(p^k)$ according to $p^k=\beta/k^{\bar{p}+1}$
        \State Fix resolvent parameter $\gamma>0$
        \State Specify
        sequences $(\hat{\varphi}^k),(\hat{\theta}^k) \subset \left[0,1\right]$ such that $\hat{\varphi}^k,\hat{\theta}^k \uparrow 1$ and $R^{\hat{\varphi}^0}_{\gamma \A}$, $R^{\hat{\theta}^0}_{\gamma \B}$ are nonexpansive
        \State Specify a starting point $y^0$ and tolerance $\tol_y>0$ 
        \State Specify reflection parameter tolerances $\tol_{\varphi},\tol_{\theta}\in [0,1]$
        \State Set $(\varphi^0,\theta^0) \gets (\hat{\varphi}^0,\hat{\theta}^0)$ and initialize $(\tau,j) \gets (1,0)$
        \For{$k=0,1,\ldots,k_{\max}$}
            \State $T^k \gets \frac12 \left( I + R_{\gamma \A}^{\varphi^k}R_{\gamma \B}^{\theta^k} \right)$, $y^{k+1} \gets T^k (y^k)$
            \If{$\|y^{k+1} - y^k\| \leq p^k$} \Comment{(If the Cauchy condition is satisfied at step $k+1$)}
                \If{$\tau = 1$} \Comment{(And if the Cauchy condition has never been violated)}
                    \State $j \gets j + 1$
                    \State $(\varphi^{k+1},\theta^{k+1}) \gets (\hat{\varphi}^j,\hat{\theta}^j)$ \Comment{(Increment sequences closer to 1)}
                \Else \Comment{(If the Cauchy condition has ever been violated)}
                    \State $(\varphi^{k+1},\theta^{k+1}) \gets ({\varphi}^{k},{\theta}^{k})$ \Comment{(Keep the sequences the same)}
                \EndIf
            \Else \Comment{(If the Cauchy condition is violated at step $k+1$)}
                \State $\tau \gets 0$, $j \gets \max\{j - 1,0\}$
                \State $(\varphi^{k+1},\theta^{k+1}) \gets (\hat{\varphi}^j,\hat{\theta}^j)$ \Comment{(Backtrack toward the nonexpansive case)}
            \EndIf
            \State $\tau^k \gets \tau$
            \If{$\varphi^k\geq1-\tol_{\varphi},\;\theta^k\geq1-\tol_{\theta}$ and $\|y^{k+1}-y^k\|\leq \tol_y$}
            \State \Return $y^{k+1}$
            \EndIf
        \EndFor
        \State \Return $\lambda\in J_{\gamma\B}(y^{k+1})$
    \end{algorithmic}
\end{algorithm}

\subsubsection{Convergence analysis}\label{subsec:convergence_analysis}

We show in the following Lemma the convergence of the HOST sequence $(y^k)$.
There is no analytic subtlety here, as we choose the governing sequence $(p^k)$ to enforce that the dual sequence $(y^k)$ either be Cauchy or be generated by an averaged mapping.
The actual subtlety lies in Theorems~\ref{thm:dual_convergence} and \ref{thm:primal_convergence}, where we unpack the properties of the limit obtained.

\begin{lemma}\label{lem:homotopy_alg_convergence}
    The following hold:
    \begin{enumerate}[label=(\roman*)]
        \item\label{item:parameters_converge} The sequence $(\varphi^k,\theta^k)$ in Algorithm~\ref{alg:homotopy} converges to some $(\bar{\varphi},\bar{\theta})\in [0,1]^2$.
        \item\label{item:sequence_converges} The sequence $(y^k)$ converges weakly. If in addition $(\varphi^k,\theta^k)$ converges to any value other than $(\varphi^0,\theta^0)$, then $(y^k)$ converges strongly.
        \item\label{item:operators_converge} The operator sequence $(T^{\varphi^k,\theta^k}_{\A,\B})$ is pointwise convergent to
    $$
    \overline{T}\coloneqq \frac12 \left( I+R_{\gamma \A}^{\bar{\varphi}}\circ R_{\gamma \B}^{\bar{\theta}}\right) .
    $$
    \end{enumerate}
\end{lemma}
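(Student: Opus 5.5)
The proof rests on a dichotomy driven by the flag $\tau$ in Algorithm~\ref{alg:homotopy}: either $\tau$ stays equal to $1$ for all $k$, or it is set to $0$ at some first iterate $k_0$ and never reset. I treat the two cases separately.

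\emph{Case $\tau\equiv 1$.} The Cauchy test $\|y^{k+1}-y^k\|\le p^k=\beta/k^{\bar p+1}$ holds at every step. Since $\bar p>0$, the series $\sum_k p^k$ converges, so $\sum_k\|y^{k+1}-y^k\|<\infty$. Hence $(y^k)$ is Cauchy and converges strongly, which gives the strongest form of \ref{item:sequence_converges}. In this case $j$ increases by one at every step, so $(\varphi^k,\theta^k)=(\hat\varphi^{j_k},\hat\theta^{j_k})$ with $j_k\to\infty$. Because $\hat\varphi^k,\hat\theta^k\uparrow 1$, we get $(\bar\varphi,\bar\theta)=(1,1)$, which proves \ref{item:parameters_converge}.

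\emph{Case $\tau$ eventually $0$.} After $k_0$ the parameters change only through the backtracking branch. In that branch $j$ is nonincreasing and bounded below by $0$, so it changes only finitely often unless it reaches $0$, where it then stays. In either event $j$ is eventually constant, say $j=\bar j$. The parameters are therefore eventually equal to a constant pair $(\bar\varphi,\bar\theta)=(\hat\varphi^{\bar j},\hat\theta^{\bar j})$, which again gives \ref{item:parameters_converge}.

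To finish \ref{item:sequence_converges} in this case I split once more.
\begin{itemize}
\item If the Cauchy test fails infinitely often after $k_0$, then $j$ must have reached $0$, so $T^k=T^{\hat\varphi^0,\hat\theta^0}_{\A,\B}$ eventually. By the choice in line~3 of the algorithm, $R^{\hat\varphi^0}_{\gamma\A}$ and $R^{\hat\theta^0}_{\gamma\B}$ are nonexpansive, so this operator is $\tfrac12$-averaged. Applying Lemma~\ref{lem:over_relaxations_H_theta} and Theorem~\ref{thm:DR} to $T_{\A^{\hat\varphi^0},\B^{\hat\theta^0}}$ then gives weak convergence of $(y^k)$.
\item If the test fails only finitely often, then $\|y^{k+1}-y^k\|\le p^k$ eventually, and summability of $p^k$ again gives strong convergence.
\end{itemize}
For the strong-convergence clause of \ref{item:sequence_converges}, note that a limit different from $(\varphi^0,\theta^0)$ means $\bar j\neq 0$. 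By the first bullet, infinitely many failures would force $j=0$, so only finitely many failures can occur, and we are in the summable (strong) case.

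The main obstacle is the hypothesis of Theorem~\ref{thm:DR} in the weak case: it needs $\Fix T^{\hat\varphi^0,\hat\theta^0}_{\A,\B}\neq\emptyset$. I would obtain this from the standing Assumption, by mapping a zero $\lambda^*$ of $\A+\B$ to a fixed point, or simply assume it as in the earlier theorem, since the paper's $\hat\varphi^0=\hat\theta^0=0$ warm start is intended exactly for this.

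For \ref{item:operators_converge}, fix $y$. Each over-relaxed resolvent $R^\alpha_{\gamma D}$ is affine in $\alpha$; this is the continuity in $\alpha$ established in the Corollary. I work with selections, so $J$ is single-valued, and then
$$\|R^{\varphi^k}_{\gamma\A}R^{\theta^k}_{\gamma\B}y-R^{\bar\varphi}_{\gamma\A}R^{\bar\theta}_{\gamma\B}y\|\le \|R^{\varphi^k}_{\gamma\A}u_k-R^{\varphi^k}_{\gamma\A}\bar u\|+\|R^{\varphi^k}_{\gamma\A}\bar u-R^{\bar\varphi}_{\gamma\A}\bar u\|,$$
where $u_k=R^{\theta^k}_{\gamma\B}y$ and $\bar u=R^{\bar\theta}_{\gamma\B}y$. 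Affinity in $\theta$ gives $u_k\to\bar u$. The first term is at most $3\|u_k-\bar u\|$, using the $(1+2\alpha)$-Lipschitz bound from the Corollary, and the second term vanishes by affinity in $\varphi$. Hence $T^{\varphi^k,\theta^k}_{\A,\B}y\to\overline T y$.
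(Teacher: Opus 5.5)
Your proof is correct and follows essentially the paper's argument: the dichotomy on the flag $\tau$, summability of $p^k$ to get a Cauchy sequence, eventual constancy of the parameters once $\tau=0$, Theorem~\ref{thm:DR} for the warm-start operator, and continuity in the parameters for (iii). Your version is slightly tighter in two places: you track the integer index $j$ rather than the parameter values, and your Lipschitz estimate supplies a step the paper leaves implicit in (iii). One caution on your first suggested fix: a zero $\lambda^*$ of $\A+\B$ yields a fixed point of the target operator $T_{\A,\B}=T^{1,1}_{\A,\B}$, not of the warm-start operator $T^{\hat{\varphi}^0,\hat{\theta}^0}_{\A,\B}$ (for $\hat{\varphi}^0=\hat{\theta}^0=0$ its fixed points are those of $J_{\gamma\A}J_{\gamma\B}$, a different problem). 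So $\Fix T^{\hat{\varphi}^0,\hat{\theta}^0}_{\A,\B}\neq\emptyset$ must simply be assumed, which is your fallback and exactly what the paper tacitly does when it invokes Theorem~\ref{thm:DR}.
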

\begin{proof}
    \ref{item:parameters_converge}. By construction, one of the following holds:
    \begin{enumerate}
        \item\label{item:tau_goes_to_1} Flag $\tau^k = 1$ for all $k$. In this case, we have that $(\varphi^k,\theta^k) \to (\bar{\varphi},\bar{\theta}) = (1,1)$ by construction of $(\hat{\varphi}^k,\hat{\theta}^k)$.
        Moreover, the condition $\| y^{k+1}-y^k\| \leq p^k$ is never violated, whereupon $\sum_k \| y^{k+1}-y^k\| < \infty$ and $(y^k)$ is Cauchy.
        \item\label{item:tau_goes_to_0} There exists $K$ such that $\tau^k =0$ for all $k\geq K$. For $k \geq K$ we have that the sequences $(\varphi^k,\theta^k)$ are monotone decreasing and that $\{(\varphi^k,\theta^k)\}_{k \geq K} \subseteq \{ (\hat{\varphi}^j,\hat{\theta}^j) \}_{j \leq K}$, which is a finite set.
        By the pigeonhole principle, $(\varphi^k,\theta^k)$ converges finitely (after, say, $K^\prime$ iterations) to some $(\bar{\varphi},\bar{\theta}) \in \{ (\hat{\varphi}^j,\hat{\theta}^j) \}_{j \leq K}$.
    \end{enumerate}
    
    \ref{item:sequence_converges}.
    If $(\bar{\varphi},\bar{\theta}) \neq ({\varphi}^0,{\theta}^0)$ then we have that the condition $\|y^{k+1}-y^k \| \leq p^k$ is violated at most finitely many times (specifically, it is never violated for $k\geq K^\prime$), wherefore $\sum_k \|y^{k+1}-y^k \| < \infty$, wherefore $y^k$ is Cauchy and therefore converges.
    If $(\bar{\varphi},\bar{\theta}) = ({\varphi}^0,{\theta}^0)$, then, say after $K^\prime$ iterations, $T^{\varphi^k,\theta^k}_{\A,\B}=T_{\A,\B}^{\varphi^0,\theta^0}$, which by Theorem~\ref{thm:DR} admits a sequence $y^k$ that converges weakly to a fixed point of $T^{\varphi^0,\theta^0}_{\A,\B}$.

    \ref{item:operators_converge}.
    The convergence of $(\varphi^k,\theta^k)$ to some $(\bar{\varphi},\bar{\theta})$ follows from \ref{item:parameters_converge}.
    The statement then follows from Corollary~\ref{cor:continuous_in_param_reflections}.
\end{proof}

\begin{theorem}[Convergence of HOST]\label{thm:dual_convergence} 
    Suppose one of the following:
    \begin{enumerate}
        \item $\H$ is finite dimensional;
        \item $(\bar{\varphi},\bar{\theta}) \neq ({\varphi}^0,{\theta}^0)$.
    \end{enumerate}
    Then the sequence $(y^k)$ generated by Algorithm~\ref{alg:homotopy} converges to a fixed point of $\overline{T}$.
    If $(\bar{\varphi},\bar{\theta}) \neq ({\varphi}^0,{\theta}^0)$, then the rate of this convergence is no worse than $\frac{\beta}{\bar{p}k^{\bar{p}}}$.
\end{theorem}
\begin{proof}
    If $(\bar{\varphi},\bar{\theta}) = ({\varphi}^0,{\theta}^0)$, then the convergence holds due to Theorem~\ref{thm:DR}.
    Let's now suppose otherwise.
    Since $(\bar{\varphi},\bar{\theta}) \neq (\varphi^0,\theta^0)$, we have by Lemma~\ref{lem:homotopy_alg_convergence} that the limit $\bar{y}=\lim_{k\to\infty} y^k$ exists and that, for some $K^\prime\in\N$,
    $\|y^k-y^{k+1}\|\leq \beta/k^{\bar{p}+1}$ for all $k\geq K^\prime$.
    It follows that for all $k> K^\prime$:
    \begin{equation}\label{eqn:sublinear_yield}
        \|\bar{y}-y^k\|\leq \sum_{j=k+1}^{\infty}\|y^j-y^{j-1}\|\leq\sum_{j=k+1}^{\infty} \frac{\beta}{j^{\bar{p}+1}}\leq \int_k^{\infty} \frac{\beta}{x^{\bar{p}+1}}dx=\frac{\beta}{\bar{p}k^{\bar{p}}}.
    \end{equation}
    It remains to show that $\bar{y}$ is a fixed point of $\overline{T}$. 
    The triangle inequality yields
    $$
        \left\| \overline{T}\bar{y}-\bar{y} \right\|
        \leq
        \left\| y^k-\bar{y} \right\|+ \left\| \overline{T}\bar{y}-T^{\varphi^k,\theta^k}_{\A,\B}\bar{y} \right\| + \left\| y^k-T^{\varphi^k,\theta^k}_{\A,\B}y^k \right\| + \left\| T^{\varphi^k,\theta^k}_{\A,\B}\bar{y}-T^{\varphi^k,\theta^k}_{\A,\B}y^k \right\|.
    $$
    We aim to show that each term of the right hand side above approaches zero as $k\rightarrow \infty$.
    The first is clear.
    The second term goes to zero by the pointwise convergence of $T^{\varphi^k,\theta^k}_{\A,\B}$ to $\overline{T}$ (Lemma~\ref{lem:homotopy_alg_convergence}\,\ref{item:operators_converge}).
    The convergence of the third term follows from the fact that $(y^k)_{k\in \N}$ is Cauchy and $y^{k+1}=T^{\varphi^k,\theta^k}_{\A,\B}(y^k)$.
    To show that the final term tends to zero, we first derive a Lipschitz constant for $T^{\varphi^k,\theta^k}_{\A,\B}$.
    Letting $x,y\in \H$ we have
    \begin{align*}
        \big\|T^{\varphi^k,\theta^k}_{\A,\B}&x-T^{\varphi^k,\theta^k}_{\A,\B}y\big\|
    =
        \frac12\big\|R_{\gamma \A}^{\varphi^k}R_{\gamma \B}^{\theta^k}x-R_{\gamma \A}^{\varphi^k}R_{\gamma \B}^{\theta^k}y+x-y\big\| \\
    &\leq
        \frac12\big\|(1+\varphi^k)\left( J_{\gamma\A}R^{\theta^k}_{\gamma\B}x-J_{\gamma\A}R^{\theta^k}_{\gamma\B}y\right) -\varphi^k\left( R^{\theta^k}_{\gamma\B}x-R^{\theta^k}_{\gamma\B}y\right) \big\|+\frac12\|x-y\| \\
    &\leq
        \frac{1+\varphi^k}{2}\big\|J_{\gamma\A}R^{\theta^k}_{\gamma\B}x-J_{\gamma\A}R^{\theta^k}_{\gamma\B}y\big\|+\frac{\varphi^k}{2}\big\|R^{\theta^k}_{\gamma\B}x-R^{\theta^k}_{\gamma\B}y\big\|+\frac12\|x-y\| \\
    &\leq
        \frac{1+\varphi^k}{2}\big\|R^{\theta^k}_{\gamma\B}x-R^{\theta^k}_{\gamma\B}y\big\|+\frac{\varphi^k}{2}\big\|R^{\theta^k}_{\gamma\B}x-R^{\theta^k}_{\gamma\B}y\big\|+\frac12\|x-y\| \\
    &=
        \left( \frac12+\varphi^k\right) \big\|(1+\theta^k)\left( J_{\gamma\B}x-J_{\gamma\B}y\right) -\theta^k(x-y)\big\|+\frac12\|x-y\| \\
    &\leq
        \left( \frac{1+\theta^k}{2}+\varphi^k+\varphi^k\theta^k \right) \big\|J_{\gamma\B}x-J_{\gamma\B}y\big\|+\left( \frac{\theta^k+1}{2}+\varphi^k\theta^k\right) \|x-y\| \\
    &\leq
        (1+\theta^k+\varphi^k+2\varphi^k\theta^k)\|x-y\|.
    \end{align*}
    Combining this estimate with \eqref{eqn:sublinear_yield}, we see that for $k>K^\prime$ we have
    $$
        \big\|T^{\varphi^k,\theta^k}_{\A,\B}\bar{y}-T^{\varphi^k,\theta^k}_{\A,\B}y^k\big\|
        \leq
        \frac{\beta}{\bar{p}k^{\bar{p}}} (1+\varphi^k+\theta^k+2\varphi^k\theta^k),
    $$
    with the right hand side converging to zero as $k\rightarrow\infty$,
    thus showing that $\bar{y}$ is a fixed point of $\overline{T}$.
\end{proof}

We now focus our attention to the specific case that $\A=D_F$ and $\B=D_G$, with the intention of connecting HOST to a primal algorithm generalizing ADMM.
We first establish the following Lemma.

\begin{lemma}[Characterization of induced resolvents]\label{lem:characterisation_of_induced_resolvent}
    Let $D_H\in\NR$ be such that there exists $(H^{\alpha})^{-1}:\H\rightarrow\H$ such that $R^{\alpha}_{\gamma D_H} = R_{\gamma D_{H^{\alpha}}}$.\footnote{If $M=\pm I$ or $M$ is injective, then this assumption holds (see Remark~\ref{rem:homotopy_operator_exists} and Proposition~\ref{prop:descends_to_quotient}).}
    Let $M:\H\rightarrow \H_1$ be injective, linear, and $M^{-1}$ denote its preimage map.
    Then the following hold.
    \begin{enumerate}[label=(\roman*)]
        \item\label{item:nonexpansive_homotopy_resolvent} The induced resolvent $J_{\gamma D_{H^{\alpha}}}$ is nonexpansive.
        
        \item\label{item:continuous_homotopy_resolvent} For any $y\in\H$, the map $\alpha\mapsto J_{\gamma D_{H^{\alpha}}}(y)$ is single--valued and continuous. 
    
        \item\label{item:primal_rearrangement} $  (\gamma M\circ J^M_{\gamma^{-1}H^{\alpha}}\circ-M^*)(\gamma^{-1}(\cdot) + d_H)=\frac12(1+\alpha)(J_{\gamma D_H} - I) -\gamma d_H$.
        \item \label{item:primal_continuous_in_parameters} For any fixed $u \in \H_1$, the map
            \begin{equation}
            \left[0,1\right] \rightarrow {\mathrm{Image}}(M),\quad \alpha \mapsto J^M_{\gamma^{-1}H^{\alpha}}( -M^*(\gamma^{-1}u + d_H))
            \label{primal_resolve_in_alpha}
            \end{equation}
            is single--valued and continuous.
            
            \item\label{item:J_is_Lipschitz} Operator $( J^M_{\gamma^{-1} H^{\alpha}}\circ -M^*) (\gamma^{-1}(\cdot)+d_H)$ is Lipschitz continuous with modulus $(1+\alpha)/\gamma\sigma_{\min}(M)$. Here $\sigma_{\min}(M)$ denotes the smallest singular value of $M$.
    \end{enumerate}
\end{lemma}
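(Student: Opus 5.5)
The plan is to reduce every item to the explicit formula for the induced resolvent from the proof of Lemma~\ref{lem:over_relaxations_H_theta}. Throughout, $J_{\gamma D_H}$ is read as the nonexpansive selectant $S_{\gamma D_H}$ granted by $D_H\in\NR$, so it is single valued with full domain. With $D^{\alpha}$ constructed as in that proof, and using the hypothesis $R^{\alpha}_{\gamma D_H}=R_{\gamma D_{H^{\alpha}}}$, I will identify
\begin{equation*}
J_{\gamma D_{H^{\alpha}}}=\tfrac12(1+\alpha)J_{\gamma D_H}+\tfrac12(1-\alpha)I .
\end{equation*}
This follows by rearranging $2J_{\gamma D_{H^{\alpha}}}-I=(1+\alpha)J_{\gamma D_H}-\alpha I$.

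Items \ref{item:nonexpansive_homotopy_resolvent} and \ref{item:continuous_homotopy_resolvent} then follow directly. A convex combination of the nonexpansive maps $J_{\gamma D_H}$ and $I$ is nonexpansive. For fixed $y$, the map $\alpha\mapsto J_{\gamma D_{H^{\alpha}}}(y)$ is affine in $\alpha$ with single-valued coefficients, hence single valued and continuous; this mirrors Corollary~\ref{cor:continuous_in_param_reflections}.

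For item \ref{item:primal_rearrangement}, I apply Theorem~\ref{thm:Moreaus_identity} to $H^{\alpha}$, keeping the same $M$ and $d=d_H$ (the dual operator of $H^{\alpha}$ is formed with the same shift). This gives, for every $u$,
\begin{equation*}
\gamma M J^M_{\gamma^{-1}H^{\alpha}}\bigl(-M^*(\gamma^{-1}u+d_H)\bigr)=J_{\gamma D_{H^{\alpha}}}(u)-u-\gamma d_H .
\end{equation*}
Substituting the displayed formula for $J_{\gamma D_{H^{\alpha}}}$ gives $\tfrac12(1+\alpha)(J_{\gamma D_H}u-u)-\gamma d_H$, which is the claim. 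If Corollary~\ref{selection_GMI_corollary} is needed, it makes the selections on both sides consistent.

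For items \ref{item:primal_continuous_in_parameters} and \ref{item:J_is_Lipschitz}, I use injectivity of $M$. Any two elements $q_1,q_2$ of $J^M_{\gamma^{-1}H^{\alpha}}(-M^*(\gamma^{-1}u+d_H))$ have the same image $Mq_1=Mq_2$, fixed by item \ref{item:primal_rearrangement}, so $q_1=q_2$ and the map is single valued. I then write
\begin{equation*}
q(\alpha,u)=M^{-1}\Bigl(\gamma^{-1}\bigl[\tfrac12(1+\alpha)(J_{\gamma D_H}u-u)-\gamma d_H\bigr]\Bigr).
\end{equation*}
The key inequality is $\|Mq\|\ge\sigma_{\min}(M)\|q\|$, which makes $M^{-1}$ Lipschitz on the range of $M$ with constant $1/\sigma_{\min}(M)$. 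The argument of $M^{-1}$ is affine and continuous in $\alpha$, so continuity in $\alpha$ follows. For the Lipschitz bound, nonexpansiveness of $J_{\gamma D_H}$ gives
\begin{equation*}
\|(J_{\gamma D_H}u_1-u_1)-(J_{\gamma D_H}u_2-u_2)\|\le 2\|u_1-u_2\|,
\end{equation*}
so the bracket is $(1+\alpha)$-Lipschitz in $u$. Dividing by $\gamma$ and applying $M^{-1}$ gives the modulus $(1+\alpha)/(\gamma\sigma_{\min}(M))$.

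I expect the main obstacle to be the bookkeeping rather than any estimate. First, one must justify that the resolvent appearing in Theorem~\ref{thm:Moreaus_identity} for $H^{\alpha}$ is exactly the selection-based formula above, so that the set identity collapses to a single-valued equality. Second, one needs $\sigma_{\min}(M)>0$, which is automatic for injective $M$ in finite dimensions and must be assumed, as closed range, in general $\H$. I would handle the first point via Corollary~\ref{selection_GMI_corollary} and state the second explicitly as the meaning of $\sigma_{\min}(M)$.
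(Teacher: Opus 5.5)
Your proposal is correct and follows essentially the same route as the paper: you obtain $J_{\gamma D_{H^{\alpha}}}=\tfrac12(1+\alpha)J_{\gamma D_H}+\tfrac12(1-\alpha)I$ from the hypothesis, read off (i)--(ii) from it, substitute it into \eqref{GMI} for (iii), and use injectivity of $M$ together with $\|M^{-1}v\|\le\|v\|/\sigma_{\min}(M)$ on the range of $M$ for (iv)--(v). Your explicit caveats are points the paper leaves implicit: reading $J_{\gamma D_H}$ as the single-valued nonexpansive selectant, and needing $M$ bounded below (closed range) in infinite dimensions. The appeal to Corollary~\ref{selection_GMI_corollary} is unnecessary, because the set identity of Theorem~\ref{thm:Moreaus_identity} already reduces to a singleton once $J_{\gamma D_{H^{\alpha}}}$ is single-valued.
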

\begin{proof}
    \ref{item:nonexpansive_homotopy_resolvent}.
    By assumption we have $J_{\gamma D_{H^{\alpha}}}(y)=\frac12((1+\alpha)J_{\gamma D_H}(y)+(1-\alpha)y)$.
    It follows that for all $x,y\in\H$:
    \begin{align*}
        \big\|J_{\gamma D_{H^{\alpha}}}x-J_{\gamma D_{H^{\alpha}}}y\big\|
        &=
        \frac12\|(1+\alpha)(J_{\gamma D_H}x-J_{\gamma D_H}y)+(1-\alpha)(x-y)\|\\
        &\leq
        \frac{1+\alpha}2\|J_{\gamma D_H}x-J_{\gamma D_H}y\|+\frac{1-\alpha}2\|x-y\|
        \leq
        \|x-y\|,
    \end{align*}
    with the final inequality following from the nonexpansiveness of $J_{\gamma D_H}$.
    
    \ref{item:continuous_homotopy_resolvent}.
    The mapping's single--valuedness follows from the single--valuedness of $J_{\gamma D_H}$, with $J_{\gamma D_{H^{\alpha}}}$ affine in $\alpha$ and hence continuous.
    
    \ref{item:primal_rearrangement}.
    Assuming the existence of $(H^{\alpha})^{-1}$ such that $R^{\alpha}_{\gamma D_H} = R_{\gamma D_{H^{\alpha}}}$, we have
    \begin{equation*}
            \frac{1}{2}\left( (1+\alpha)J_{\gamma D_H} + (1-\alpha)I\right)
            =
            J_{\gamma D_{H^{\alpha}}}
            =
            I + \gamma d_H + \gamma MJ_{\gamma^{-1} H^{\alpha}}^M (-M^*(\gamma^{-1}(\cdot) + d_H)),
    \end{equation*}
    with the second equality following from Theorem~\ref{thm:Moreaus_identity}.
    Rearranging yields \ref{item:primal_rearrangement}.
    
    \ref{item:primal_continuous_in_parameters}.
    The single--valuedness of the right hand side of \ref{item:primal_rearrangement} together with injectivity of $M$ yields the single--valuedness of \eqref{primal_resolve_in_alpha}. The mapping's continuity in $\alpha$ is obvious.
    
    \ref{item:J_is_Lipschitz}.
    By \ref{item:primal_rearrangement} we have
    \begin{align*}
        \big\|J_{\gamma^{-1}H^{\alpha}}&(-M^*(\gamma^{-1}x+d_H))-J_{\gamma^{-1}H^{\alpha}}(-M^*(\gamma^{-1}y+d_H))\big\| \\
        &=
            \Big\|\frac1{2\gamma}M^{-1}\left( (1+\alpha)(J_{\gamma D_H}x-J_{\gamma D_H}y)-(1+\alpha)(x-y)\right) \Big\|\\
        &\leq
            \frac1{2\gamma\sigma_{\min}(M)}\left( (1+\alpha)\|J_{\gamma D_H}x-J_{\gamma D_H}y\|+(1+\alpha)\|x-y\|\right) \\
        &\leq
            \frac1{2\gamma\sigma_{\min}(M)}\left( (1+\alpha)\|x-y\|+(1+\alpha)\|x-y\|\right)
        =
            \frac{1+\alpha}{\gamma\sigma_{\min}(M)}\|x-y\|,
    \end{align*}
    which shows the result.
\end{proof}

\begin{theorem}[Convergence of the HOST--induced primal algorithm]\label{thm:primal_convergence}
    Let $A,B$ be injective and $A^{-1},B^{-1}$ denote their single--valued preimage maps.
    Let $D_F,D_G\in\NR$.
    be such that there exists, at each iteration $k$, $(F^{\varphi^k})^{-1}$ and $(G^{\theta^k})^{-1}$ such that $R_{\gamma D_{F^{\varphi^k}}}=R^{\varphi^k}_{\gamma D_F}$ and $R_{\gamma D_{G^{\theta^k}}}=R^{\theta^k}_{\gamma D_G}$, respectively.
    Then, as $y^k\rightarrow \bar{y}$ as $k \to \infty$, the shadow and primal variable sequences satisfy the following limits.
    \begin{enumerate}[label=(\roman*)]
        \item\label{item:primal_x} The sequence $(x^k)$ such that $x^{k+1}\coloneqq J^A_{\rho^{-1}F^{\varphi^{k+1}}}(-A^*(Bz^{k}+\rho^{-1}\lambda^{k}+d_{F^{\varphi^{k+1}}}+d_{G^{\varphi^{k+1}}}))$ satisfies
        \begin{equation}
        x^k \rightarrow \bar{x}
        =
        J_{\rho^{-1} F^{{\varphi^{k+1}}}}^A (-A^* (\rho^{-1} R_{\rho D_{G^{\theta^k}}}\bar{y} + d_{F^{\varphi^{k+1}}})). \label{x_limit_resolvent_version}
        \end{equation}
        \item\label{item:primal_z} The sequence $(z^k)$ such that  $
        z^{k+1}\coloneqq J_{\rho^{-1} G^{\theta^k}}^B (-B^* (Ax^k+\rho^{-1} \lambda^{k-1} +d_F+ d_G))$ satisfies
        \begin{equation}
        z^k \rightarrow \bar{z}
        =
        J_{\rho^{-1} G^{\bar{\theta}}}^B (-B^* (\rho^{-1} \bar{y} + d_G)).\label{z_limit_resolvent_version}
        \end{equation}
        \item\label{item:primal_lambda} The sequence $(\lambda^k)$ such that $\lambda^{k+1} \coloneqq \lambda^k+\rho(Ax^{k+1}+Bz^{k+1}+d_F+d_G)$ satisfies $\lambda^k \rightarrow \bar{\lambda} = J_{\rho D_{G^{\bar{\theta}}}}\bar{y}$.
    \end{enumerate}
\end{theorem}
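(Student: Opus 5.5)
The plan is to run the HOST-induced primal iteration through the Gabay identities of Theorem~\ref{thm:GMI_duality}, applied with the operator sequences $F^k := F^{\varphi^k}$ and $G^k := G^{\theta^k}$. By Lemma~\ref{lem:over_relaxations_H_theta} and the standing assumption, each HOST step equals a DR step for the pair $D_{F^{\varphi^{k+1}}}, D_{G^{\theta^k}}$. Theorem~\ref{thm:GMI_duality} then says the sequence $(x^k,z^k,\lambda^k)$ of \eqref{primal_alg_updates} is connected to $(y^k)$ as in Table~\ref{tab:gabay_identities}. In particular:
\begin{align*}
\lambda^k &= J_{\rho D_{G^{\theta^k}}}y^k,\\
z^k &\in J^B_{\rho^{-1}G^{\theta^k}}\big(-B^*(\rho^{-1}y^k+d_G)\big),\\
x^{k+1} &\in J^A_{\rho^{-1}F^{\varphi^{k+1}}}\big(-A^*(\rho^{-1}R_{\rho D_{G^{\theta^k}}}y^k+d_F)\big).
\end{align*}
These are items (iii), (vi) and the first Gabay identity in that proof. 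Each primal variable is thereby written as a fixed, explicit map of the single dual variable $y^k$. The limits then reduce to continuity of those maps jointly in the argument $y$ and the parameter $\alpha$.

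\emph{Step 1 ($\lambda$).} By Lemma~\ref{lem:characterisation_of_induced_resolvent}\ref{item:nonexpansive_homotopy_resolvent}--\ref{item:continuous_homotopy_resolvent}, $J_{\rho D_{G^{\theta}}}=\tfrac12\big((1+\theta)J_{\rho D_G}+(1-\theta)I\big)$ is single-valued and nonexpansive in $y$, and affine in $\theta$. I split the error as
\[
\|J_{\rho D_{G^{\theta^k}}}y^k-J_{\rho D_{G^{\bar\theta}}}\bar y\|\le \|y^k-\bar y\|+\tfrac{|\theta^k-\bar\theta|}{2}\,\|J_{\rho D_G}\bar y-\bar y\|.
\]
Both terms vanish by Theorem~\ref{thm:dual_convergence} and Lemma~\ref{lem:homotopy_alg_convergence}\ref{item:parameters_converge}, which gives (iii).

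\emph{Step 2 ($z$).} By Lemma~\ref{lem:characterisation_of_induced_resolvent}\ref{item:primal_rearrangement}, with $M=B$ injective,
\[
J^B_{\rho^{-1}G^{\theta}}\big(-B^*(\rho^{-1}y+d_G)\big)=\tfrac{1}{\rho}B^{-1}\Big(\tfrac{1+\theta}{2}(J_{\rho D_G}y-y)-\rho d_G\Big).
\]
The right-hand side is single-valued, so the inclusion for $z^k$ is an equality. It is Lipschitz in $y$ by Lemma~\ref{lem:characterisation_of_induced_resolvent}\ref{item:J_is_Lipschitz}, with constant at most $2/(\rho\sigma_{\min}(B))$ uniformly in $\theta$, and it is affine in $\theta$. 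The same two-term splitting as in Step 1 then gives $z^k\to\bar z$ as in \eqref{z_limit_resolvent_version}. I use here that $B^{-1}$ is bounded on $\mathrm{Image}\,B$, which is implicit in $\sigma_{\min}(B)>0$.

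\emph{Step 3 ($x$).} First, $R_{\rho D_{G^{\theta^k}}}y^k=R^{\theta^k}_{\rho D_G}y^k\to R^{\bar\theta}_{\rho D_G}\bar y$. This follows from the $(1+2\alpha)$-Lipschitz bound in the Corollary after Lemma~\ref{lem:over_relaxations_H_theta}, together with affine dependence on $\theta$. Then I apply Lemma~\ref{lem:characterisation_of_induced_resolvent}\ref{item:primal_rearrangement},\ref{item:J_is_Lipschitz} with $M=A$, $H=F$ and $\alpha=\varphi^{k+1}\to\bar\varphi$, exactly as in Step 2. This shows $x^{k+1}$ converges to the resolvent expression in \eqref{x_limit_resolvent_version}, read with the limiting parameters $\bar\varphi,\bar\theta$.

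\emph{Main obstacle.} The hard part is bookkeeping rather than analysis. I must check that the recursions in (i)--(iii) of the statement, whose index conventions are stated somewhat loosely, really coincide with the Gabay representation above. Concretely, the $z$-update written with $Ax^k+\rho^{-1}\lambda^{k-1}$ must equal the argument $\rho^{-1}y^k+d_G$, using $y^k=\lambda^{k-1}+\rho(Ax^k+d_F)$. If that identification holds, as in the proof of Theorem~\ref{thm:GMI_duality}, everything else is the uniform-Lipschitz-plus-affine-in-parameter argument above.
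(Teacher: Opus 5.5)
Your proposal is correct and is essentially the paper's proof. Both use the Gabay identities of Theorem~\ref{thm:GMI_duality} to write $\lambda^k$, $z^k$ and $x^{k+1}$ as parameter-dependent maps of the single variable $y^k$. Both then split the error into a term controlled by Lipschitz continuity in $y$ that is uniform in the parameter (Lemma~\ref{lem:characterisation_of_induced_resolvent}), and a term controlled by continuity, here affineness, in the parameter.

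Your version is slightly more careful than the paper's in three ways:
\begin{enumerate}[label=(\roman*)]
\item It tracks the $\theta^k$-dependence of the reflection inside the $x$-update, which the paper's third-term estimate glosses over.
\item It makes explicit that $\sigma_{\min}>0$, i.e.\ a bounded inverse, is needed.
\item It uses the assumed strong convergence $y^k\to\bar y$ directly, so it needs no separate appeal to an external result in the case $(\bar\varphi,\bar\theta)=(\varphi^0,\theta^0)$.
\end{enumerate}
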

\begin{proof}
    In the case when $(\bar{\varphi},\bar{\theta}) = (\varphi^0,\theta^0)$, the results follow from, for example, \cite[Theorem~15.4]{beck2017methods}.
    We will prove the result for the case $(\bar{\varphi},\bar{\theta}) \neq (\varphi^0,\theta^0)$.
    
    \ref{item:primal_x}.
        By Theorem~\ref{thm:GMI_duality} and Table~\ref{tab:gabay_identities} we have $\lambda^k+\rho(Bz^k+d_G)=R_{\rho D_{G^{\theta^k}}}(y^k)$.
        It follows that 
        \begin{multline*}
            x^{k+1}
            =
            J^A_{\rho^{-1}F^{\varphi^k}}(-A^*(Bz^k+\rho^{-1}\lambda^k+d_F+d_G))
            \\=
            J^A_{\rho^{-1}F^{\varphi^k}}(-A^*(\rho^{-1}R_{\rho D_{G^{\theta^k}}}y^k+d_F))
            =:
            s_{\varphi^k}(y^k).
        \end{multline*}
        The triangle inequality yields
        \begin{multline}
        \big\|x^{k+1}-s_{\bar{\varphi}}(\bar{y})\big\|
        \leq
        \big\|x^{k+1}-s_{\varphi^k}(\bar{y})\big\|+\|s_{\varphi^k}(\bar{y})-s_{\bar{\varphi}}(\bar{y})\|
        \\ \leq
        \big\|x^{k+1}-s_{\varphi^k}(y^k)\big\|+\|s_{\varphi^k}(y^k)-s_{\varphi^k}(\bar{y})\|+\|s_{\varphi^k}(\bar{y})-s_{\bar{\varphi}}(\bar{y})\|.
        \label{x_triangle_ineq}
        \end{multline}
        Here, the first term on the right hand side is zero by definition. By the Lipschitz continuity established in Lemma~\ref{lem:characterisation_of_induced_resolvent}\,\ref{item:J_is_Lipschitz}, the second term is estimated as
        \begin{equation*}
            \|s_{\varphi^k}(y^k)-s_{\varphi^k}(\bar{y})\|
            \leq
            \frac{1+\alpha}{\gamma\sigma_{\min}(M)}\big\|R_{\rho D_{G^{\varphi^k}}}y^k-R_{\rho D_{G^{\varphi^k}}}\bar{y}\big\|
            =
            \frac{(1+\alpha)(1+2\varphi^k)}{\gamma\sigma_{\min}(M)}\|y^k-\bar{y}\|
        \end{equation*}
        and thus converges to zero as $y^k\rightarrow\bar{y}$.
        The third term on the right hand side of \eqref{x_triangle_ineq} converges to zero by Lemma~\ref{lem:characterisation_of_induced_resolvent}\,\ref{item:primal_continuous_in_parameters}.

    \ref{item:primal_z}.
        Substituting the Gabay identity $y^{k}=\lambda^{k-1}+\rho (Ax^{k}+d_F)$, our $z$--update can be expressed as 
        $$
        z^{k+1}=J^B_{\rho^{-1}G^{\theta^k}}(-B^*(\rho^{-1}y^k+d_G))=:s_{\theta^k}(y^k).
        $$
        Similarly applying the triangle inequality as above gives
        \begin{equation*}
        \big\|z^{k+1}-s_{\bar{\theta}}(\bar{y})\big\|\leq \big\|z^{k+1}-s_{\theta^k}(y^k)\big\|+\|s_{\theta^k}(y^k)-s_{\theta^k}(\bar{y})\|+\|s_{\theta^k}(\bar{y})-s_{\bar{\theta}}(\bar{y})\|,
        \end{equation*}
        whose first term on the right hand side is zero by definition, with the second term converging to zero by the Lipschitz continuity of $s_{\theta^k}$ (Lemma~\ref{lem:characterisation_of_induced_resolvent}\,\ref{item:J_is_Lipschitz}) and the fact that $y^k\rightarrow \bar{y}$.
        The third tends to zero by the continuity of $s_{\theta^k}$ in $\theta^k$ (Lemma~\ref{lem:characterisation_of_induced_resolvent}\,\ref{item:primal_continuous_in_parameters}).
    
    \ref{item:primal_lambda}.
        By Table~\ref{tab:gabay_identities} and the single--valuedness of $J_{\rho D_{G^{\theta^k}}}$ we have $\lambda^{k+1}=J_{\rho D_{G^{\theta^k}}}y^{k+1}$.
        It follows that 
        \begin{multline*}
            \|\lambda^{k+1} - J_{\rho D_{G^{\bar{\theta}}}}\bar{y} \|
            \leq
            \|\lambda^{k+1} - J_{\rho D_{G^{\theta^k}}}y^{k+1}\|
            \\+ \| J_{\rho D_{G^{\theta^k}}}y^{k+1} - J_{\rho D_{G^{\theta^k}}}\bar{y}\|
            + \| J_{\rho D_{G^{\theta^k}}}\bar{y} - J_{\rho D_{G^{\bar{\theta}}}}\bar{y}\| .
        \end{multline*}
        The first term is zero by definition.
        By \ref{lem:characterisation_of_induced_resolvent}\,\ref{item:nonexpansive_homotopy_resolvent} and Lemma~\ref{lem:homotopy_alg_convergence}, we have
        $$
        \big\| J_{\rho D_{G^{\theta^k}}}y^k - J_{\rho D_{G^{\theta^k}}}\bar{y}\big\|\leq \|y^k-\bar{y}\|\longrightarrow 0.
        $$
        The third term tends to zero by the continuity of $\smash{J_{\gamma D_{G^{\theta^k}}}}$ with respect to $\theta^k$; see Lemma~\ref{lem:characterisation_of_induced_resolvent}\,\ref{item:continuous_homotopy_resolvent}.
\end{proof}

In view of Theorem~\ref{thm:primal_convergence}, the solution $(\bar{x},\bar{z},\bar{\lambda})$ obtained by HOST solves 
\begin{align*}
    0\in
    \begin{bmatrix}
    F^{\bar{\varphi}}(\bar{x})+A^*\bar{\lambda}\\
    G^{\bar{\theta}}(\bar{z})+B^*\bar{\lambda}\\
    A\bar{x}+B\bar{z}+d_F+d_G
    \end{bmatrix}\quad \text{and} \quad
    & 0\in D_{F^{\bar{\varphi}}}(\lambda)+G_{G^{\bar{\theta}}}(\lambda).
\end{align*}
It is clear that the obtained solution $\bar{\lambda}$ is feasible with respect to the constraint $Ax+Bz+d_F+d_G = 0$. Moreover, in the case where $(\bar{\varphi},\bar{\theta})=(1,1)$, we have that $(\bar{x},\bar{z},\bar{\lambda})$ solves the original problem \eqref{primal_inclusion_problem}. This was the case in all of our experiments. In the next section we will see that for many problems of interest the operators $F_{\varphi},G_{\theta}$ enjoy natural interpretations as homotopies related to $F$ and $G$. 

\section{Applications to nonconvex optimization}\label{sec:experiment_results}

We now consider the application of HOST to nonconvex problems.
We first revisit the nonconvex--regularized basis pursuit instances of section~\ref{ex:basis_pursuit}.
Then we consider \eqref{rlad_problem} in section~\ref{subsec:rlad}.
The dual inclusion of each of these problems is such that $D_F$ is maximal monotone, so one could therefore set $\varphi^k=1$ for all $k\in\N$.
Since $D_G$ is a sparsity--promoting operator, we will see that even if $\theta^k\nrightarrow 1$, our solution still solves a sparse regularized problem where the regularizer is an intermediate map between the nonconvex regularizer we have specified and a convex regularizer to which it is homotopic.
Hence, even if $\theta^k \nrightarrow 1$, our solution is both interpretable and practical for the purpose we intend.

\subsection{Basis pursuit}\label{subsec:basis_pursuit}

The HOST and HOST--induced primal algorithm iterations when HOST is applied to \eqref{problem:basis_pursuit_dual} are visualized in Figure~\ref{fig:basis_pursuit_successful_convergence}.
For these experiments we set $\varphi^k=\theta^k=\smash{\frac{\log(k+1)}{1+\log(k+1)}}$ and $p^k=200/k^{1.1}$.
We maintain the same color scheme as in Figure~\ref{fig:basis_pursuit_failed_convergence}, with HOST iterates shown transitioning from cyan to magenta as $k\rightarrow \infty$ and primal algorithm sequence $(x^k)$ represented using green squares.
Observe that unlike DR, HOST does not exhibit periodic or chaotic behaviour and it resists the attraction of fixed points that do not correspond to useful primal solutions. 

\begin{figure}[htb!]
    \centering%
    \begin{subfigure}[c]{0.24\textwidth}
\centering
\resizebox{0.9\textwidth}{!}{
\begin{tikzpicture}
\begin{axis}[
  axis lines=middle,
  axis line style = {<->, line width=1.2pt},
  xmin=-2.7, xmax=2.7,
  ymin=-2.7, ymax=2.7,
  xlabel={}, ylabel={},
  xticklabels={}, yticklabels={},
  width=9cm, height=9cm,
  axis equal image,
]

\addplot[blue, line width=1.6pt] coordinates {
  (-2,-2) (2,-2) (2,2) (-2,2) (-2,-2)
};

\addplot[blue, line width=1.6pt] coordinates {
  (-1,-1) (1,-1) (1,1) (-1,1) (-1,-1)
};

\addplot[green!70!black, line width=2.2pt, domain=-2.6:2.6,<->] {-1 + 0.5*x};

\addplot[
  scatter,
  scatter src=explicit,
  scatter/use mapped color={draw=black, fill=mapped color},
  only marks,
  mark=*,
  mark size=4pt,
  colormap name = cyan2magenta,
]
table[
  col sep=comma,
  x=x,
  y=y,
  meta expr=\coordindex
]{SuccessfulConvergence_HOST_points.csv};

\addplot[
  scatter,
  scatter src=explicit,
  scatter/use mapped color={draw=black, fill=mapped color},
  only marks,
  mark=square*,
  mark size=4pt,
  colormap name = teal2green,
]
table[
  col sep=comma,
  x=x,
  y=y,
  meta expr=\coordindex
]{SuccessfulConvergence_HADMM_points.csv};

\addplot[
  scatter,
  scatter src=explicit,
  no marks,
  line width = 2pt,
  draw=cyan,
]
table[
  col sep=comma,
  x=x,
  y=y,
  meta expr=\coordindex
]{SuccessfulConvergence_HOST_points.csv};
\end{axis}
\end{tikzpicture}
}
\end{subfigure}
\hfill
\begin{subfigure}[c]{0.24\textwidth}
\centering
\resizebox{0.9\textwidth}{!}{
\begin{tikzpicture}
\begin{axis}[
  axis lines=middle,
  axis line style = {<->, line width=1.2pt},
  xmin=-2.7, xmax=2.7,
  ymin=-2.7, ymax=2.7,
  xlabel={}, ylabel={},
  xticklabels={}, yticklabels={},
  width=9cm, height=9cm,
  axis equal image,
]

\addplot[blue, line width=1.6pt] coordinates {
  (-2,-2) (2,-2) (2,2) (-2,2) (-2,-2)
};

\addplot[blue, line width=1.6pt] coordinates {
  (-1,-1) (1,-1) (1,1) (-1,1) (-1,-1)
};

\addplot[green!70!black, line width=2.2pt, domain=-2.6:2.6,<->] {-1 + 0.5*x};

\addplot[
  scatter,
  scatter src=explicit,
  scatter/use mapped color={draw=black, fill=mapped color},
  only marks,
  mark=*,
  mark size=4pt,
  colormap name = cyan2magenta,
]
table[
  col sep=comma,
  x=x,
  y=y,
  meta expr=\coordindex
]{bad_fixed_success_HOST_points.csv};

\addplot[
  scatter,
  scatter src=explicit,
  scatter/use mapped color={draw=black, fill=mapped color},
  only marks,
  mark=square*,
  mark size=4pt,
  colormap name = teal2green,
]
table[
  col sep=comma,
  x=x,
  y=y,
  meta expr=\coordindex
]{bad_fixed_success_HADMM_points.csv};

\addplot[
  scatter,
  scatter src=explicit,
  no marks,
  line width = 2pt,
  draw=cyan,
]
table[
  col sep=comma,
  x=x,
  y=y,
  meta expr=\coordindex
]{bad_fixed_success_HOST_points.csv};
\end{axis}
\end{tikzpicture}
}
\end{subfigure}
\hfill
\begin{subfigure}[c]{0.24\textwidth}
\centering
\resizebox{0.9\textwidth}{!}{
\begin{tikzpicture}
\begin{axis}[
  axis lines=middle,
  axis line style = {<->, line width=1.2pt},
  xmin=-2.7, xmax=2.7,
  ymin=-2.7, ymax=2.7,
  xlabel={}, ylabel={},
  xticklabels={}, yticklabels={},
  width=9cm, height=9cm,
  axis equal image,
]

\addplot[blue,line width=1.6pt] coordinates {
  (-2,-2) (2,-2) (2,2) (-2,2) (-2,-2)
};

\addplot[blue, line width=1.6pt] coordinates {
  (-1,-1) (1,-1) (1,1) (-1,1) (-1,-1)
};

\addplot[green!70!black, line width=2.2pt, domain=-1.5:2.6,<->] {1 - x};

\addplot[
  scatter,
  scatter src=explicit,
  scatter/use mapped color={draw=black, fill=mapped color},
  only marks,
  mark=square*,
  mark size=4pt,
  colormap name=teal2green,
]
table[
  col sep=comma,
  x=x,
  y=y,
  meta expr=\coordindex
]{periodic_success_HADMM_points.csv};

\addplot[
  scatter,
  scatter src=explicit,
  scatter/use mapped color={draw=black, fill=mapped color},
  only marks,
  mark=*,
  mark size=4pt,
  colormap name = cyan2magenta,
]
table[
  col sep=comma,
  x=x,
  y=y,
  meta expr=\coordindex
]{periodic_success_HOST_points.csv};

\addplot[
  scatter,
  scatter src=explicit,
  scatter/use mapped color={draw=black, fill=mapped color},
  no marks,
  line width = 2pt,
  draw=cyan,
]
table[
  col sep=comma,
  x=x,
  y=y,
  meta expr=\coordindex
]{periodic_success_HOST_points.csv};

\end{axis}
\end{tikzpicture}
}
\end{subfigure}
\hfill
\begin{subfigure}[c]{0.24\textwidth}
\centering
\resizebox{0.9\textwidth}{!}{
\begin{tikzpicture}
\begin{axis}[
  axis lines=middle,
  axis line style = {<->, line width=1.2pt},
  xmin=-2.7, xmax=2.7,
  ymin=-2.7, ymax=2.7,
  xlabel={}, ylabel={},
  xticklabels={}, yticklabels={},
  width=9cm, height=9cm,
  axis equal image,
]

\addplot[blue, line width=1.6pt] coordinates {
  (-2,-2) (2,-2) (2,2) (-2,2) (-2,-2)
};

\addplot[blue, line width=1.6pt] coordinates {
  (-1,-1) (1,-1) (1,1) (-1,1) (-1,-1)
};

\addplot[green!70!black, line width=2.2pt, domain=-1.25:2.6,<->] {1.14 - 0.951954*x};

\addplot[
  scatter,
  scatter src=explicit,
  scatter/use mapped color={draw=black, fill=mapped color},
  only marks,
  mark=*,
  mark size=4pt,
  colormap name = cyan2magenta,
]
table[
  col sep=comma,
  x=x,
  y=y,
  meta expr=\coordindex
]{chaos_success_HOST_points.csv};

\addplot[
  scatter,
  scatter src=explicit,
  scatter/use mapped color={draw=black, fill=mapped color},
  only marks,
  mark=square*,
  mark size=4pt,
  colormap name = teal2green,
]
table[
  col sep=comma,
  x=x,
  y=y,
  meta expr=\coordindex
]{chaos_success_HADMM_points.csv};

\addplot[
  scatter,
  scatter src=explicit,
  no marks,
  line width=2pt,
  draw=cyan,
]
table[
  col sep=comma,
  x=x,
  y=y,
  meta expr=\coordindex
]{chaos_success_HOST_points.csv};

\end{axis}
\end{tikzpicture}}
\end{subfigure}
    \caption{HOST and induced primal sequences generated for instances of \eqref{problem:basis_pursuit_dual}.}%
    \label{fig:basis_pursuit_successful_convergence}%
\end{figure}

Applying Lemma~\ref{lem:over_relaxations_H_theta} with $D=(\partial\reg)^{-1}$, we see that there exists $(\phi_{\mcp}^{\theta})^{-1}$ such that $R_{\gamma(\partial \phi_{\mcp}^{\theta})^{-1}}=R^{\theta}_{\gamma(\partial\phi_{\mcp})^{-1}}$.
Solving for $\phi_{\mcp}^{\theta}$ yields
$$
    \phi^{\theta}_{\mcp}(y)
    =
    \begin{cases}
        \frac{y^2(\beta(1-\theta)-2)}{2\beta(1+\theta)}+\lambda|y|\ifc |y|<\frac{\beta\lambda(1+\theta)}2\\
        \frac{y^2(1-\theta)}{2(1+\theta)}+\frac{\beta\lambda^2(1+\theta)}4\ifc |y|\geq \frac{\beta\lambda(1+\theta)}2.
    \end{cases}
$$
It follows from Theorem~\ref{thm:primal_convergence} that if the HOST shadow sequence $(\lambda^k)$ converges to a solution of \eqref{problem:basis_pursuit_dual}, then the primal sequence it induces $(x^k,z^k)$ converges to a stationary point of 
\begin{equation}
    \minimize_{x,z\in \R^n}\quad \indicator_{\M}^{\varphi}(x)+\sum_{i=1}^n\phi_{\mcp}^{\theta}(z_i)
    \quad\subjectto\quad
    x-z=0.
    \label{induced_basis_pursuit_primal_problem}
\end{equation}

\subsection{Regularized LAD}\label{subsec:rlad}

We now consider the application of HOST to \eqref{rlad_problem}.
We utilize block variables to ensure that the HOST updates are computable in closed form.
This choice is justified in Remark~\ref{rem:blocking}.
Let $z=(\hat{z},\tilde{z})$ be the vertical concatenation of $\hat{z}\in\R^m$ and $\tilde{z}\in\R^n$ and consider the following problem equivalent to \eqref{rlad_problem}.

\begin{equation}
    \minimize_{(x,z)\in\R^n\times\R^{m+n}}\quad \|\hat{z}\|_1+\reg(\tilde{z})
    \quad\subjectto\quad 
    \begin{bmatrix}
    Ux-\hat{z}-w\\
    x-\tilde{z}
    \end{bmatrix}
    =0.
    \label{problem:block_rlad_minimisation}\tag{$\ensuremath{\mathcal{P}}$-$\reg$LAD}
\end{equation}
Setting $A= \big[ U^{\top} \quad I_n \big]^{\top}$, $B=-I_{m+n}$, $d_F=[ -w^{\top} \quad 0 ]^{\top}$ and $d_G=0$, the constraints of \eqref{problem:block_rlad_minimisation} can the expressed in an affine form.
Notice also that the objective of \eqref{problem:block_rlad_minimisation} matches \eqref{primal_optimisation_problem} with $f\equiv0$ and $g(z)=\|\hat{z}\|_1+\reg(\tilde{z})$.

\begin{remark}[Limitation of a natural splitting]\label{rem:blocking}
    A natural way to recast \eqref{rlad_problem} into form \eqref{primal_optimisation_problem} is
    \begin{equation}
        \minimize_{(x,z)\in\R^n\times\R^m}\quad \|z\|_1+ \reg(x)
        \quad\subjectto\quad
        Ux-z-w=0.
        \label{rlad_bad_formulation}
    \end{equation}
    Here the dual operator associated with the $x$ variable is $D_F=-U\circ(\partial\reg)^{-1}\circ-U^{\top}$. By \eqref{GMI} we have 
    $$
    J_{\gamma D_F}(u)=u-\gamma J^U_{\gamma^{-1}\partial\reg}(\gamma^{-1}U^{\top}u).
    $$
    The operator $J^U_{\gamma^{-1}\partial\reg}=(U^{\top}U+\partial\reg)^{-1}$ is not computable in closed form in general, whereas formulation \eqref{problem:block_rlad_minimisation} yields fully explicit updates, as we will see. We thus choose to apply HOST to the dual inclusion associated with \eqref{problem:block_rlad_minimisation} over that of \eqref{rlad_bad_formulation} to reduce the per--iteration computational cost of implementing the method.
\end{remark}

The dual inclusion corresponding to \eqref{problem:block_rlad_minimisation} is to find $\lambda\in\H$ such that 
\begin{equation}
    0\in \underbrace{ -A(F^{-1}(-A^{\top} (\lambda))-d_F}_{=D_F(\lambda)}+\underbrace{G ^{-1}(\lambda)}_{=D_G(\lambda)}.
    \label{LAD_dual_problem}\tag{$\ensuremath{\mathcal{D}}$-$\reg$LAD}
\end{equation}
Here $F^{-1}$ represents the preimage map of the zero function i.e., $F^{-1}(y)=\R^n$ if $y=0$ and $F^{-1}(y)=\emptyset$ otherwise.
Since $g$ is separable, the Clarke subdifferential decomposes componentwise: $\partial g(z)= \partial\|\hat{z}\|_1\times \partial \reg(\tilde{z})=:G(z)$.
The resolvent of $D_F$, computed using Theorem~\ref{thm:Moreaus_identity}, is given by
\begin{multline*}
    J_{\rho D_F}(y)
    =
    y+\rho d_F+\rho A\left( A^{\top}A+\rho^{-1}F\right) ^{-1}(-A^{\top}(\rho^{-1}y+ d_F))
    \\=
    y+\rho d_F-\rho A( A^{\top}A)^{-1}A^{\top}(\rho^{-1}y+ d_F).
\end{multline*}
Similar application of \eqref{GMI} yields
\begin{equation}
    J_{\rho D_G}(z)
    =
    z-\rho\left( I+\rho^{-1}G\right) ^{-1}(\rho^{-1}z)
    =
    z-(\rho J_{\rho^{-1}(\partial \|\cdot\|_1)}(\rho^{-1}\hat{z}),\rho S_{\rho^{-1}(\partial \reg)}(\rho^{-1}\tilde{z})).
    \label{equ:LAD_D1_resolvent}
\end{equation}
The mapping $J_{\rho^{-1}(\partial\||\cdot\|_1)}$ corresponds to the so-called soft--thresholding operator: $J_{\gamma(\partial\||\cdot\|_1)}(x)_i=\operatorname{sign}(x_i)\max\{0,|x_i|-\gamma\}$.
By Proposition~\ref{prop:prox_is_selection}, selections of the proximal operators of MCP and SCAD may serve as selection $S_{\rho^{-1}(\partial \reg)}$.
We refer the reader to \href{https://proximity-operator.net/}{proximity-operator.net} for expressions of these mappings.

\subsubsection{Numerical Results}

We numerically validate HOST by comparing the solutions it locates for \eqref{rlad_problem} with MCP and SCAD regularization to those obtained by CVXPY \cite{boyd2016cvxpy} with $\reg=\lambda\|\cdot\|_1$.
We refer to MCP and SCAD regularized \eqref{rlad_problem} as MCP-LAD and SCAD-LAD, respectively.
The methods are compared on eight problem instances, with the design and response data taken from the Faraway and scikit--learn datasets \cite{faraway2020data,pedregosa2011scikitlearn}.
The data is standardised by centering each feature to have zero mean and scaling to unit variance.
Each dataset is randomly sampled to form model training sets $U_{\mathrm{train}}$ and $w_{\mathrm{train}}$, where each algorithm sees the same training set in each instance.
The remaining samples $U_{\mathrm{test}}$ and $w_{\mathrm{test}}$ are used to validate each model's accuracy.

Appropriate regularizer weights for each solver are determined by running each instance over 50 values in a logarithmically--spaced grid of weights $\lambda\in[0.1,10]$.
We then compare the candidate solution $x^*$ returned by each solver under the weight that yielded the smallest average test error: $\frac1n\|U_{\mathrm{test}}x^*-w_{\mathrm{test}}\|_1$.
 
We applied HOST to MCP-LAD and SCAD-LAD with regularizer parameters $\beta$ and $a$ selected empirically for each problem instance.
The parameters $\rho=1$ and $k_{\max}=2000$ are fixed for all HOST experiments.
Since $D_F$ in \eqref{LAD_dual_problem} is maximal monotone, we fix $\varphi^k=1$ for all $k$.
We define the sequence $(\theta^k)$ according to the affine ramp function:
$$
\theta^k=
\begin{cases}
0\ifc k\leq 100\\
\frac{k-100}{700} \theta_{\max} \ifc k\in (100,800)\\
\theta_{\max}\ifc k\geq 800.
\end{cases}
$$

We set $\theta_{\max}=1$ for all HOST experiments. We also include the results of HOST$^{\dagger}$, where we set $\theta_{\max}=0.8$ to observe performance in cases where $\theta^k\nrightarrow 1$. The average test error and \emph{sparsity} of each $x^*$ i.e., the number of indices $i$ such that $|x_i^*|>0.1$, are reported in Table~\ref{table:experiments}. For reproducibility, the chosen parameter values are detailed in accompanying code.

{\setlength{\tabcolsep}{2.5pt} 
\begin{table}[tbh]
    \centering
    \small 
    \begin{tabular}{c | cc | cc | cc | cc}
        \textbf{Problem}&\multicolumn{2}{c|}{$\boldsymbol{\ell}_{\mathbf{1}}$\bfseries -- CVXPY}&\multicolumn{2}{c|}{\bfseries MCP -- HOST$^{\boldsymbol{\dagger}}$}&\multicolumn{2}{c|}{\bfseries MCP -- HOST}  & \multicolumn{2}{c}{\bfseries SCAD -- HOST}\\
        \textbf{instance} & sparsity&ave. error &sparsity&ave. error&sparsity&ave. error&sparsity & ave. error \\
        \hline
        1&1&0.0488&2&\emph{0.1383}&1&0.0484&1&0.0489\\
        2&3&0.6114&3&0.6152&3&0.6098&3&0.6103\\
        3&4&0.4396&4&0.4426&4&0.4399&4&0.4397\\
        4&9&0.526&12&\textbf{0.5085}&9&\textbf{0.4978}&9&0.5259\\
        5&5&0.2401&3&0.2407&4&\textbf{0.2315}&5&\textbf{0.2345}\\
        6&7&0.5371&6&\textbf{0.5307}&6&0.5331&6&0.5319\\
        7&5&0.6081&5&0.612&5&0.6075&5&0.6075\\
        8&1&0.5313&1&0.5317&1&0.5312&1&0.5313
    \end{tabular}
    \caption{Benchmarking the solution quality of HOST for MCP and SCAD regularized \eqref{rlad_problem} against CVXPY applied to \eqref{rlad_problem} with $\ell_1$ regularization.}
    \label{table:experiments}
\end{table}}

We observe in Table~\ref{table:experiments} that HOST applied to MCP-LAD or SCAD-LAD, in general, returns solutions with a small number of nonzero entries and error than CVXPY on \eqref{rlad_problem} with $\ell_1$ regularization.
Errors are bolded if they are less than 1\% smaller than the error for $\ell_1$-LAD.
Errors are italicized if they are more than 1\% higher than the $\ell_1$-LAD error. 

DR is a natural heuristic for \eqref{rlad_problem} because of the nonsmoothness of both terms. We compare the solutions obtained by DR and HOST for SCAD-LAD with problem instance 8 in Figure~\ref{fig:DRvsHOST}.
The plot shows the objective value of candidate solutions obtained by each method over the logarithmically--spaced grid of weights $\lambda\in[0.1,100]$. We also show the average error of these solutions using dashed lines. HOST and DR are shown in blue and red respectively. We set a maximum number of 2000 iterates, although the methods may stop earlier if $\|y^{k+1}-y^k\|\leq 10^{-3}$.

\begin{figure}[tbh]
    \centering
    \begin{tikzpicture}[scale=1]
\begin{axis}[
    width=14cm,
    height=5cm,
    xmode=log,
    xmin=0.1,
    xmax=100,
    xlabel={regularisation weight $\lambda$},
    ylabel={objective value},
    ymin=250,
    xmajorgrids=false,
    ymajorgrids=false,
    legend to name=combinedlegend,
    legend columns = 2,
    legend style={font=\scriptsize,draw=none},
]
\addplot[
    line width = 2.5pt,
    mark={},
    mark size=2pt,
    cb2blue
] table[
    col sep=comma,
    x=lambda,
    y=objective
] {HOST_objective.csv};
\addlegendentry{HOST objective value}

\addlegendimage{cb2blue, dashed, line width=2.5pt}
\addlegendentry{HOST average error}

\addplot[
    line width = 1.5pt,
    mark={},
    mark size=2pt,
    cb2red
] table[
    col sep=comma,
    x=lambda,
    y=objective
] {DR_objective.csv};
\addlegendentry{DR objective value}

\addlegendimage{cb2red, dashed, line width=1.5pt}
\addlegendentry{DR average error}
\end{axis}

\begin{axis}[
    width=14cm,
    height=5cm,
    xmode=log,
    xmin=0.1,
    xmax=100,
    ymin=0.51,
    axis x line=none,
    axis y line*=right,
    xlabel={$\lambda$},
    ylabel={average error},
    xmajorgrids=false,
    ymajorgrids=false,
]
\addplot[
    line width = 2.5pt,
    dashed,
    mark={},
    mark size=2pt,
    cb2blue
] table[
    col sep=comma,
    x=lambda,
    y=error
] {HOST_ave_error.csv};
\addplot[
    line width = 1.5pt,
    dashed,
    mark={},
    mark size=2pt,
    cb2red
] table[
    col sep=comma,
    x=lambda,
    y=error
] {DR_ave_error.csv};
\end{axis}

\node at (7.95,0.6) {\pgfplotslegendfromname{combinedlegend}};
\end{tikzpicture}
    \caption{Objective value and average error of solutions generated by HOST and DR for SCAD--regularized \eqref{rlad_problem} for varying regularization strength parameters.}
    \label{fig:DRvsHOST}
\end{figure}

\section{Conclusion}

This work proposed a general algorithmic duality framework to derive and explore the relationships between resolvent splitting methods.
We use this framework to establish the duality between DR and an ADMM--like method, extending the well known relationship between DR and ADMM in the convex setting.
We additionally show certain relationships between the methods' iterates to hold in general.
This result is fundamental for, for instance, implementing dual--based extrapolation schemes.
We use our framework to show that applying DR to dual problems may fail to generate convergent solution sequences.

Motivated by DR's observed failure for certain problems, we utilised the algorithmic duality framework to propose HOST: a convergent scheme for the class of two--operator inclusion problems where both operators enjoy nonexpansive resolvents.
Revisiting problem instances where DR failed to converge to useful solutions, we show that HOST is naturally more resilient than DR for this problem.
We also use these instances to illustrate HOST's warm--starting and homotopy--inducing nature.

We validate the practical relevance of HOST with numerical experiments for the nonsmooth and nonconvex--regularized least absolute deviations problem.
We compare HOST's solutions to those generated by CVXPY for $\ell_1$--regularized least absolute deviations.
This comparison required the development of a novel formulation for \eqref{rlad_problem} based on block variables.
Our results show that applying HOST to \eqref{rlad_problem} often yields superior solutions to established approaches.

\section*{Acknowledgements}

The MATRIX Institute workshop \emph{Splitting Algorithms - Advances, Challenges, and Opportunities} was instrumental to the success of this work.
Participation of all authors was made possible by generous financial support from MATRIX Institute, Simons Foundation, and Australian Mathematical Sciences Institute, as well as Australian Mathematical Society and its special interest group Mathematics of Computation and Optimisation, and an in-kind contribution from Drs.\ Minh N. Dao and Lien Nguyen.

\section*{Declarations}

\subsection*{Online availability of experimental results}
All code and experimental results associated with this work are publicly available and archived on Zenodo at \textsc{doi}: \href{https://doi.org/10.5281/zenodo.19910577}{10.5281/zenodo.19910577}.

\subsection*{Statement about AI use}
The authors declare that no artificial intelligence tools or large language models were used in the research, writing, or preparation of this manuscript.

\bibliographystyle{plain}
\bibliography{references}

\appendix

\section{MCP and SCAD}\label{app:mcp_scad_derivations}

By Proposition~\ref{prop:prox_is_selection}, the well--defined and efficiently--computable expression for $\smash{\prox_{\tau \phi_{\mcp}}}$ supplies a valid selection of $\smash{J_{\tau \partial\phi_{\mcp}}}$ \cite{zhang2010mcp}.
We may thus obtain a closed--form selectant $\smash{S_{\gamma(\partial\phi_{\mcp})^{-1}}}$ by combining $\smash{\prox_{\tau\phi_{\mcp}}}$ (with $\tau=\gamma ^{-1}$) with Corollary~\ref{selection_GMI_corollary} (with $M=-I$ and $d=0$):
\begin{equation*}
	S_{\gamma(\partial \phi_\mcp)^{-1}}(x)=x-\gamma \prox_{\gamma^{-1}\phi_{\mcp}}(\gamma^{-1}x)=
	\begin{cases}
		x
		\ifc |x|\leq\lambda,\\
		\frac{x-\sgn(x)\beta\gamma\lambda}{1-\beta\gamma}
		\ifc |x|\in({\lambda},\beta\gamma\lambda),\\
		0
		\ifc |x|\geq \beta\gamma\lambda.
	\end{cases}
\end{equation*}
Choosing $\beta$ and $\gamma$ such that $\beta\gamma\geq 2$, one sees that $\smash{S_{\gamma (\partial \phi_{\mcp})^{-1}}}$ is nonexpansive.
Hence, $(\partial\phi_{\mcp})^{-1}\in \NR$ for these parameter choices.

The SCAD is defined as follows for $a>2$ and $\lambda>0$ \cite{fan2001scad}:
\begin{equation*}
	\phi_{\scad}(x)=
	\begin{cases}
		\lambda|x|\ifc |x|\leq\lambda\\
		\frac{2a\lambda-\lambda^2-x^2}{2(a-1)}\ifc |x|\in \left( \lambda,a\lambda \right]\\
		\frac{\lambda^2(a+1)}{2}&\text{otherwise}.
	\end{cases}
\end{equation*}
Using a similar procedure to that above, we find that $S_{\gamma(\partial\phi_{\scad})^{-1}}$ is given by
\begin{equation}
	S_{\gamma(\partial\phi_{\scad})^{-1}}(x)=
	\begin{cases}
		0\ifc |x|\geq a\gamma\lambda\\
		\frac{\sgn(x)a\gamma\lambda-x}{\gamma(a-1)-1}\ifc |x|\in\left( \lambda(\gamma+1),a\gamma\lambda\right) \\
		\sgn(x)\lambda\ifc |x|\in \left( \lambda,\lambda(\gamma+1) \right]\\
		x&\text{otherwise.}
	\end{cases}
	\label{scad_dual_resolvent}
\end{equation}
As long as $a$ and resolvent parameter $\gamma$ are chosen such that $\gamma(a-1)\geq 2$, the selectant $S_{\gamma(\partial\phi_{\scad})^{-1}}$ enjoys nonexpansiveness.

\end{document}